  \newcommand{\changefont}[3]{
  \fontfamily{#1} \fontseries{#2} \fontshape{#3} \selectfont}
  \newcommand{\Z}{\ensuremath{\mathbb{Z}}}   
  \newcommand{\R}{\ensuremath{\mathbb{R}}}   
  \newcommand{\C}{\ensuremath{\mathbb{C}}}   
  \newcommand{\T}{\ensuremath{\mathbb{T}}}   
\DeclareMathOperator{\di}{div} 
  \DeclareMathOperator{\codim}{codim} 
  \DeclareMathOperator{\Hom}{Hom} 
  \DeclareMathOperator{\cone}{cone} 
  \DeclareMathOperator{\st}{star} 
   \DeclareMathOperator{\TZ}{TZ} 
      \DeclareMathOperator{\M}{M} 
  \newtheorem{satz}{Satz}[section]
  \newtheorem{proposition}[satz]{Proposition}
  \theoremstyle{definition}
  \newtheorem{definition}[satz]{Definition}
  \newtheorem{remark}[satz]{Remark}
  \newtheorem{example}[satz]{Example}
  \newtheorem{claim}[satz]{Claim}
\renewcommand{\p@enumi}{}
    \def\blfootnote{\xdef\@thefnmark{}\@footnotetext}
  \title{Tropical intersection theory on $\R^n$}
  \author{Simon Flossmann}
\date{ }
\begin{document}
\maketitle
\thispagestyle{empty}
\begin{abstract}
\noindent In these notes we survey the tropical intersection theory on $\R^n$ by deriving the properties for tropical cycles from the corresponding properties in Chow cohomology. For this we review the stable intersection product introduced by Mikhalkin and the push forward of tropical cycles defined by Allermann and Rau. Furthermore we define a pull back for tropical cycles based on the pull back of Minkowski weights. This pull back commutes with the tropical intersection product and satisfies the projection formula. Our main result is to deduce the latter from the corresponding projection formula in Chow cohomology.
\end{abstract}

\section{Introduction}
In tropical geometry there is an intersection product for tropical cycles. The astonishing fact is that this product is well-defined as a tropical cycle in contrast to algebraic intersection theory or homology, where an equivalence relation is needed. An intersection product for tropical cycles has been introduced by Mikhalkin and Aller\-mann-Rau. Mikhalkin \cite{ap} uses a stable intersection product whereas Allermann and Rau use reduction to the diagonal and intersections with Cartier divisors as in \cite{allermann}. It is shown in \cite{katz} and \cite{rau} that both definitions agree. The goal of these notes is to deduce the usual properties of tropical cycles from the corresponding properties in Chow cohomology of toric varieties. This will lead to a new proof of the projection formula. These notes serve as back up for my talk at the Student Tropical Algebraic Geometry Seminar in Yale and are a summary of my master thesis \cite{simon}.

\par

\emph{The notes are organized as follows:} In the second section we review briefly intersection theory on toric varieties. In the third section we recall how Chow cohomology classes are relateted to Minkowski weights. In section 4 we recall the definition of tropical cycles and the stable intersection product for tropical cycles. In section 5 we review the push forward defined by \cite{allermann} and show in proposition \ref{prop:katz} that the push forward for tropical fans coincides with the induced push forward on the level of Chow cohomology induced by a projective toric morphism between the corresponding smooth and projective toric varieties, which was claimed in \cite{katz}. In section 6 we define the pull back also for tropical cycle based on the pull back of Minkowski weights as in \cite{fultonsturm}. Finally in the last section we show that this pull back commutes with the intersection product and is connected to the push forward by the projection formula.\par
 
\emph{Terminology:} Throughout these notes let $N$ and $N'$ denote free $\Z$-modules of finite ranks $r$ and $r'$. We write $N_{\R}:=N\otimes_{\Z} \R$ and $ N'_{\R}:= N' \otimes_{\Z} \R$.  Every integral, \R-affine polyhedron $\sigma$ of dimension $n$ in the $\R$-vector space $N_{\R}$ determines a $\R$-affine subspace $\mathbb{A}_{\sigma}$ with underlying vector space $\mathbb{L}_{\sigma}$ and lattice $N_{\sigma} := \mathbb{L}_{\sigma} \cap N$ in $\mathbb{L}_{\sigma}$. Given a polyhedral complex $\mathscr{C}$ of dimension $n$ in $N_{\R}$, we denote by $\mathscr{C}_n$ the subset of $n$-dimensional polyhedra in $\mathscr{C}$ and by $\mathscr{C}^{l}:= \mathscr{C}_{r-l}$ the subset of polyhedra in $\mathscr{C}$ of codimension $l$ in $N_{\R}$. We write $\tau \prec \sigma$ if $\tau $ is a face of $\sigma$. We say that a polyhedral complex $\mathscr{C}$ is of \emph{pure dimension} $n$ (resp. of \emph{pure codimension} $l$) if all polyhedra in $\mathscr{C}$ which are maximal with respect to $\prec$ lie in $\mathscr{C}_n$ (resp. $\mathscr{C}^{l}$). Given a polyhedral complex $\mathscr{C}$ of pure dimension $n$ and $m \le n$, we denote by $\mathscr{C}_{\le m}$ the polyhedral subcomplex of $\mathscr{C}$ of pure dimension $n$ given by all $\sigma \in \mathscr{C}$ with $\dim(\sigma) \le m$. And we set $\mathscr{C}^{\ge l}:= \mathscr{C}_{\le r -l}$ if $r-l \leq n$. An integral $\R$-affine polyhedron $\sigma$ in $N_{\R}$ which satisfies $\R_{\ge 0} \cdot \sigma = \sigma$ is called a \emph{rational polyhedral cone}. A rational polyhedral cone is called \emph{strictly convex} if it does not contain any line. A polyhedral complex consisting of strictly convex, rational, polyhedral cones is called a \emph{rational polyhedral fan}. We call a rational polyhedral fan \emph{regular} if every cone is \emph{regular}, i.e. its minimal generators form a part of a $\Z$-basis of $N$.\par
\emph{Acknowledgements:} I would like to thank Walter Gubler for suggesting this work to me and for numerous helpful discussions.

\section{Intersection theory of toric varieties}

In this section we briefly review the intersection theory on toric varieties. For more details see \cite{fultonsturm} and \cite{fulton}.

Following the notations of \cite{fultonsturm} and \cite{cox}, let $X_{\Sigma}$ be a toric variety  corresponding to a rational polyhedral fan $\Sigma$ in $N_\R$. Each cone $\tau \in \Sigma$ corresponds to an orbit $O(\tau)$ of the torus action, and the closure of $O(\tau)$ in $X_{\Sigma}$ is a closed torus-invariant subvariety of $X$ of the form $V(\tau)$. The dimension of $V( \tau)$ is given by
\begin{eqnarray*}
		\dim(V(\tau)) = r - \dim(\tau) = \codim(\tau).
\end{eqnarray*}
For each cone $\tau \in \Sigma^{k+1}$ we have the sublattices $N_{\tau}$ of $N$ and $M(\tau) := \tau^{\bot} \cap M$ of the dual lattice $M$ of $N$. Every nonzero $u \in M(\tau)$ determines a rational function $\chi^{u}$ on $V(\tau)$. The divisor of $\chi^{u}$ is given by
\begin{eqnarray}
		\di(\chi^{u}) = \sum_{\substack{\sigma \in \Sigma^{k}: \\ \tau \prec \sigma}} \langle u, \omega_{\sigma, \tau} \rangle \cdot V(\sigma), \label{eq:relation}
\end{eqnarray}
where $\omega_{\sigma, \tau}$ is a representative in $N_{\sigma}$ of the generator of the one-dimensional lattice $N_{\sigma} / N_{\tau}$ pointing in the direction of $\sigma$.\par
For an arbitrary variety $X$ we define the Chow group $A_{k}(X)$ as the group generated by the $k$-dimensional irreducible closed subvarieties of $X$, with relations generated by divisors of nonzero rational function on some $(k + 1)$-dimensional subvariety of $X$. In the case of toric varieties there is a nice presentation of the Chow groups  in terms of torus-invariant subvarieties and torus-invariant relations.

\begin{proposition}\label{prop:chowgroups}
 \begin{enumerate}
 \item The Chow group $A_{k}(X)$ of a toric variety $X_{\Sigma}$ is generated by the classes $[V(\sigma)]$, where $\sigma$ runs over all cones of codimension $k$ in the rational polyhedral fan $\Sigma$.
 \item The group of relations on these generators is generated by all relations \eqref{eq:relation}, where $\tau$
runs over cones of codimension $k + 1$ in $\Sigma$, and $u$ runs over a generating set of $M(\tau)$.
 \end{enumerate}
\end{proposition}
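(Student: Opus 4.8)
The plan is to reduce the whole statement to two inputs: the right-exact localization sequence for Chow groups, and the Chow groups of an algebraic torus. Recall that for a closed subvariety $Z \subseteq X$ with open complement $U = X \setminus Z$ one has the exact sequence $A_k(Z) \to A_k(X) \to A_k(U) \to 0$, and that for an algebraic torus $T'$ of dimension $d$ one has $A_k(T') = 0$ for $k < d$ and $A_d(T') \cong \Z$. The latter I would obtain by writing $T'$ as $\mathbb{A}^d$ with its coordinate hyperplanes removed and peeling them off one at a time, using the affine-bundle isomorphism $A_k(Y \times \mathbb{A}^1) \cong A_{k-1}(Y)$ for the base case. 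I would also record the algebraic fact that every invertible regular function on $T'$ is a scalar multiple of a character; this is what will link rational equivalence on the torus to the functions $\chi^{u}$. The combinatorial backbone is the orbit stratification: for $0 \le p \le r$ set $W_p := \bigcup_{\dim \tau \ge p} O(\tau)$. Using $V(\tau) = \bigcup_{\tau \prec \sigma} O(\sigma)$ one checks $W_p = \bigcup_{\dim \tau = p} V(\tau)$ is closed, giving a filtration $X = W_0 \supseteq W_1 \supseteq \cdots \supseteq W_{r} \supseteq W_{r+1} = \emptyset$ whose successive differences $W_p \setminus W_{p+1} = \coprod_{\dim \tau = p} O(\tau)$ are disjoint unions of tori of dimension $r-p$.

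For part (i), the generation statement, I would argue by descending induction on $p$ that $A_k(W_p)$ is generated by the classes $[V(\sigma)]$ with $\codim \sigma = k$ and $\dim \sigma \ge p$. Feeding $Z = W_{p+1}$ and $U = W_p \setminus W_{p+1}$ into the localization sequence gives $A_k(W_{p+1}) \to A_k(W_p) \to \bigoplus_{\dim\tau = p} A_k(O(\tau)) \to 0$. By the torus computation the right-hand term vanishes unless $k = r-p$, i.e. $\codim \tau = k$, in which case it is free on the orbit classes $[O(\tau)]$; and each such class is the restriction of $[V(\tau)] \in A_k(W_p)$, since $V(\tau)$ meets $W_p \setminus W_{p+1}$ exactly in $O(\tau)$. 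Combining this lift with the inductive hypothesis on $A_k(W_{p+1})$ yields generation for $A_k(W_p)$, and taking $p = 0$ gives (i).

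For part (ii), the relations: the expressions \eqref{eq:relation} are genuine divisors of the rational functions $\chi^{u}$ on the subvarieties $V(\tau)$, hence lie in the kernel of the surjection $\bigoplus_{\sigma \in \Sigma^k}\Z \to A_k(X)$ from (i); the content is that they generate the entire kernel. I would prove this along the same filtration. Given a cycle $\sum a_\sigma [V(\sigma)]$ that is rationally trivial on $X$, write it as a sum of divisors $\di_W(f)$ of rational functions $f$ on $(k+1)$-dimensional subvarieties $W$. Using part (i) in dimension $k+1$ together with the localization sequences, I would reduce the $W$ to orbit closures $V(\tau)$ with $\codim \tau = k+1$ and, crucially, reduce each $f$ to a character $\chi^{u}$ with $u \in M(\tau)$: this is precisely the point where ``units are characters'' on the torus $O(\tau)$ is used, and where the coefficients $\langle u, \omega_{\sigma,\tau}\rangle$ of \eqref{eq:relation} reappear as the orders of vanishing along the boundary divisors $V(\sigma)$ via the lattice quotient $N_\sigma / N_\tau$.

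I expect this last reduction to be the main obstacle. The localization sequence is only right-exact, so there is no free long exact sequence that simply hands over the kernel; instead one must track the rational functions explicitly and show that an arbitrary rational equivalence can be rewritten, modulo the relations already present on the deeper stratum $W_{p+1}$, in terms of characters on the orbit closures living in $W_p \setminus W_{p+1}$. Arranging the induction so that these two layers — the relations inherited from $W_{p+1}$ and the newly created character relations on the orbits $O(\tau)$ with $\dim \tau = p$ — fit together without gaps is the delicate step, and it is exactly here that the precise form of \eqref{eq:relation} is indispensable rather than merely sufficient.
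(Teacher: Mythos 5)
Your part (i) is sound, and it is in fact the same argument that underlies the paper's source: the paper gives no proof of this proposition, citing Fulton--Sturmfels, Proposition 1.1, and their proof of the generation statement is exactly your filtration $W_p$ by orbit closures, the right-exact localization sequence, and the vanishing of $A_k$ of a torus below the top dimension, with the lift $[V(\tau)]\mapsto[O(\tau)]$ justified just as you say.

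Part (ii), however, contains a genuine gap, and you have correctly located it yourself: the reduction of an arbitrary relation $\di_W(f)$, with $W$ a general (non-invariant) $(k+1)$-dimensional subvariety, to character relations on orbit closures is not a delicate step inside your induction --- it is the entire content of the statement, and the filtration/localization machinery cannot deliver it. The localization sequence is right-exact, so it controls cycles but says nothing about kernels; and on an orbit $O(\tau)$ with $\dim O(\tau) > k$, every $k$-cycle is rationally trivial, but the rational equivalences witnessing this live on arbitrary non-invariant subvarieties of $O(\tau)$, where ``units are characters'' does not apply (that fact only bites when $W\cap O(\tau)$ is dense in $O(\tau)$, i.e.\ when $W=V(\tau)$ and $\codim\tau = k+1$). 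Rewriting a general equivalence in terms of invariant ones is therefore exactly what must be proved, and your induction as set up is circular. The missing idea --- which is how the cited source itself proceeds, deducing this part from the theorem of Fulton--MacPherson--Sottile--Sturmfels on varieties with finitely many orbits of a solvable group --- is a translation-and-specialization argument using the torus action: for a generic one-parameter subgroup $\lambda\colon \mathbb{G}_m \to T$, the closure in $\mathbb{P}^1 \times X$ of the sweep $\{(t,\lambda(t)\cdot w)\}$ gives a rational equivalence between $[W]$ and a cycle supported on $T$-invariant subvarieties, and an elaboration of the same construction degenerates the rational functions as well, showing that the complex of invariant cycles modulo divisors of eigenfunctions on invariant subvarieties computes $A_*(X)$. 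Your proposal would become a proof only after this input (or an equivalent substitute) is supplied; as written, part (ii) asserts the conclusion at the point where the argument must do its work.
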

For a proof see \cite[Proposition 1.1]{fultonsturm}.

\section{Chow cohomology of toric varieties and Minkowski weights}

For a complete toric variety $X_{\Sigma}$ it is known \cite[Proposition 1.4]{fultonsturm}, that the Chow cohomology group $A^{k}(X_{\Sigma})$ is canonically isomorphic to $\Hom(A_{k}(X_{\Sigma}), \Z)$. Thus we can describe $A^{k}(X_{\Sigma})$, following \cite{fultonsturm}, as follows: 

\begin{definition}
Let $\Sigma$ be a complete rational polyhedral fan in $N_{\R}$. An integer-valued function 
\begin{eqnarray*}
c \colon \Sigma^{k} &\to &\Z \\
 \sigma & \mapsto & c_{\sigma}
\end{eqnarray*}
is called a \emph{Minkowski weight} of codimension $k$, if it satisfies the \emph{balancing condition}, that is, for every $\tau \in \Sigma^{k+1}$,
\begin{eqnarray*}
		\sum_{\substack{\sigma \in \Sigma^{k}: \\ \tau \prec \sigma}} c_{\sigma} \cdot \omega_{\sigma, \tau} \in N_{\tau},
	\end{eqnarray*}
where $\omega_{\sigma, \tau}$ is a representative in $N_{\sigma}$ of the generator of the one-dimensional lattice $N_{\sigma} / N_{\tau}$ pointing in the direction of $\sigma$.
\end{definition}

\begin{proposition}\label{prop:isomorphism}
The Chow cohomology group $A^{k}(X_{\Sigma})$ of a complete toric variety $X_{\Sigma}$ is canonically isomorphic to the group of Minkowski weights $\M^{k}(\Sigma)$ of codimension $k$ on $\Sigma$.
\end{proposition}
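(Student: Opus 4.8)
The plan is to combine the canonical isomorphism $A^{k}(X_{\Sigma}) \cong \Hom(A_{k}(X_{\Sigma}),\Z)$ for complete toric varieties, recalled at the start of this section, with the explicit presentation of the Chow groups from Proposition \ref{prop:chowgroups}, and then to show that a homomorphism being well-defined on the relations translates precisely into the balancing condition. First I would observe that a homomorphism $c \colon A_{k}(X_{\Sigma}) \to \Z$ is completely determined by the integers $c_{\sigma} := c([V(\sigma)])$ for $\sigma \in \Sigma^{k}$, since by Proposition \ref{prop:chowgroups}(i) the classes $[V(\sigma)]$ generate $A_{k}(X_{\Sigma})$. Conversely, an arbitrary assignment $\sigma \mapsto c_{\sigma}$ defines a homomorphism on the free group on these generators, and it descends to $A_{k}(X_{\Sigma})$ exactly when it annihilates the defining relations. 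By Proposition \ref{prop:chowgroups}(ii) these relations are generated by the divisors $\di(\chi^{u}) = \sum_{\sigma \in \Sigma^{k},\, \tau \prec \sigma} \langle u, \omega_{\sigma,\tau}\rangle\, V(\sigma)$, as $\tau$ ranges over $\Sigma^{k+1}$ and $u$ over a generating set of $M(\tau)$. Hence $c$ is well-defined if and only if, for every such $\tau$ and $u$,
\begin{eqnarray*}
\sum_{\substack{\sigma \in \Sigma^{k}:\\ \tau \prec \sigma}} \langle u, \omega_{\sigma,\tau}\rangle\, c_{\sigma} = 0.
\end{eqnarray*}

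The key step is to rewrite this vanishing condition. Pulling the pairing out of the sum, the left-hand side equals $\langle u, v_{\tau}\rangle$, where $v_{\tau} := \sum_{\sigma \in \Sigma^{k},\, \tau \prec \sigma} c_{\sigma}\, \omega_{\sigma,\tau} \in N$; although the representatives $\omega_{\sigma,\tau}$ are only determined modulo $N_{\tau}$, the element $v_{\tau}$ is well-defined modulo $N_{\tau}$, so both the pairing $\langle u, v_{\tau}\rangle$ for $u \in \tau^{\bot}$ and the membership $v_{\tau} \in N_{\tau}$ are independent of the choices. Thus $c$ descends if and only if $\langle u, v_{\tau}\rangle = 0$ for all $u$ in the chosen generating set, equivalently for all $u \in M(\tau) = \tau^{\bot} \cap M$. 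I would then invoke the duality coming from the perfect pairing $M \times N \to \Z$: since $\tau^{\bot} \cap M$ is a saturated sublattice of $M$ of rank $r - \dim\tau$, its annihilator $(\tau^{\bot} \cap M)^{\bot} \cap N$ is a saturated sublattice of $N$ of rank $\dim\tau$; it visibly contains $N_{\tau} = \mathbb{L}_{\tau} \cap N$, which is saturated of the same rank, so the two coincide. Consequently $\langle u, v_{\tau}\rangle = 0$ for all $u \in M(\tau)$ if and only if $v_{\tau} \in N_{\tau}$, which is exactly the balancing condition. This establishes a bijection, compatible with the group structure, between the homomorphisms $A_{k}(X_{\Sigma}) \to \Z$ and the Minkowski weights in $\M^{k}(\Sigma)$.

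Finally I would note that the assignment $c \mapsto (c_{\sigma})_{\sigma}$ is additive and uses only the canonical presentation of $A_{k}(X_{\Sigma})$ together with the canonical pairing, so the resulting isomorphism $A^{k}(X_{\Sigma}) \cong \M^{k}(\Sigma)$ is canonical, as claimed. The main obstacle I anticipate is the lattice-theoretic equivalence in the middle step: one must verify that the annihilator of $\tau^{\bot} \cap M$ inside the lattice $N$ itself, rather than inside $N_{\R}$, is precisely $N_{\tau}$. The containment $N_{\tau} \subseteq (\tau^{\bot} \cap M)^{\bot} \cap N$ is immediate from the definitions, and the reverse inclusion follows once one checks that both lattices are saturated of complementary rank; this is where the saturatedness of $\tau^{\bot} \cap M$ and of $N_{\tau}$ must be used carefully.
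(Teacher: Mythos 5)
Your proposal is correct and takes essentially the same approach as the paper: it combines the duality $A^{k}(X_{\Sigma}) \cong \Hom(A_{k}(X_{\Sigma}),\Z)$ with the presentation of $A_{k}(X_{\Sigma})$ from Proposition \ref{prop:chowgroups}, so that annihilating the relations $\di(\chi^{u})$ translates, via the lattice duality $(\tau^{\bot}\cap M)^{\bot}\cap N = N_{\tau}$, into the balancing condition. The paper states this as an immediate consequence and leaves the details to the cited presentation; your write-up merely supplies those details, including the saturation argument, correctly.
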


This proposition is an immediate consequence of the description of $A^{k}(X_{\Sigma})$ in Proposition \ref{prop:chowgroups} and the isomorphism 
\begin{eqnarray*}
A^{k}(X_{\Sigma})& \to & \Hom(A^{k}(X_{\Sigma}), \Z) \\  c &\mapsto &\big(\alpha \to \deg(c \frown \alpha)\big).
\end{eqnarray*}

Identifying Chow cohomology classes with Minkowski weights we can describe the cup product of Chow cohomology classes combinatorial by the \emph{fan displacement rule}:

\begin{proposition}\label{prop:fandisplacementrule}
Let $c \in \M^{k}(\Sigma)$ and $d \in \M^{l}(\Sigma)$ then their \emph{product} is given by
\begin{eqnarray*}
\smallsmile \, \colon \M^{k+l}(\Sigma) & \to & \Z\\
c \smallsmile d & \mapsto &(c \smallsmile d)(\tau) := \sum_{\substack{\sigma \in \Sigma^{(k)}: \\ \tau \prec \sigma} } \, \sum_{\substack{ \sigma' \in \Sigma^{(l)}: \\ \tau \prec  \sigma'} } m^{\tau}_{\sigma, \sigma'} \cdot c_{\sigma}\cdot d_{\sigma'},
\end{eqnarray*}where the coefficient $m^{\tau}_{\sigma, \sigma'}$ is not uniquely defined but depend on the choice of a generic vector $v \in N_{\R}$ (cf. \cite[Sec. 3]{fultonsturm} for the precise definition). For such a vector we have	
$$
	m^{\tau}_{\sigma, \sigma'}  := \begin{cases} [N: N_{\sigma} + N_{\sigma'}], & \text{ if } \sigma \cap (\sigma' +v) \not = \emptyset \text{ and }  \\
0, & \text{ otherwise. } \end{cases}
$$
\end{proposition}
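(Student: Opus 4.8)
The plan is to transport the cup product on $A^{\bullet}(X_{\Sigma})$ through the isomorphism of Proposition \ref{prop:isomorphism} and then to identify the resulting product of Minkowski weights with the stated combinatorial formula by a general-position argument for torus-invariant cycles. By that isomorphism a Minkowski weight $c$ of codimension $k$ is exactly the functional $[V(\sigma)] \mapsto c_{\sigma}$ on $A_{k}(X_{\Sigma})$, so for $\tau \in \Sigma^{k+l}$ the value $(c \smallsmile d)(\tau)$ is the evaluation of the class $c \smallsmile d \in A^{k+l}(X_{\Sigma})$ on $[V(\tau)]$, that is $\deg\big(c \frown (d \frown [V(\tau)])\big)$. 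The whole statement thus reduces to computing this degree cone by cone.

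First I would localise at $\tau$. Capping with $[V(\tau)]$ only sees the fan near $\tau$, so I would pass to the star fan $\st(\tau)$ in $(N/N_{\tau})_{\R}$, whose associated toric variety is precisely $V(\tau)$. Under this reduction the cones $\sigma \succ \tau$ with $\sigma \in \Sigma^{k}$ (resp.\ $\sigma' \succ \tau$ with $\sigma' \in \Sigma^{l}$) correspond to the codimension-$k$ (resp.\ codimension-$l$) cones of $\st(\tau)$, codimension being preserved under passage to the star, and $c$ and $d$ restrict to Minkowski weights of these codimensions on $\st(\tau)$; here I would use the compatibility that $d \frown [V(\tau)]$ is represented by the restriction of $d$ to $\st(\tau)$. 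Since $V(\tau)$ has dimension $k+l$, the subvarieties $V(\sigma)$ and $V(\sigma')$ have complementary dimensions $k$ and $l$ inside it, so a proper intersection is zero-dimensional and the problem becomes an honest intersection number on the complete toric variety $V(\tau)$.

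Next I would invoke the moving lemma in the toric setting: translating one of the two cycles by a generic one-parameter subgroup, equivalently displacing by a generic vector $v \in N_{\R}$, puts $V(\sigma)$ and the translate of $V(\sigma')$ into transverse position. Genericity guarantees that the only contributions come from pairs with $\sigma \cap (\sigma' + v) \neq \emptyset$, which are exactly the pairs indexing the double sum, each contributing a single transverse intersection point. A local toric computation at such a point identifies the intersection multiplicity with the lattice index $[N : N_{\sigma} + N_{\sigma'}]$, that is with $m^{\tau}_{\sigma, \sigma'}$, so that summing over the contributing pairs yields $\sum_{\sigma, \sigma'} m^{\tau}_{\sigma, \sigma'}\, c_{\sigma} d_{\sigma'}$, as claimed.

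The main obstacle is exactly this last, geometric, step: proving that a generic displacement forces a proper and transverse intersection of the invariant subvarieties, and that the local intersection multiplicity at each resulting point equals the stated lattice index. Showing that the answer does not depend on the chosen generic $v$ belongs to the same difficulty; here one can argue a posteriori, since the cup product $c \smallsmile d$ is intrinsically defined in $A^{k+l}(X_{\Sigma})$ and therefore cannot depend on $v$, so any two admissible choices must produce the same Minkowski weight. Completeness of $X_{\Sigma}$ is used throughout, both for the isomorphism of Proposition \ref{prop:isomorphism} and for the existence of the degree map on $A_{0}(X_{\Sigma})$.
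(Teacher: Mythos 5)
Your opening reductions are sound (and match the route the paper points to, since the paper itself offers no argument beyond citing \cite[Prop.~3.1(a)]{fultonsturm}): identifying $(c \smallsmile d)(\tau)$ with $\deg\big(c \frown (d \frown [V(\tau)])\big)$, and localising to the star fan of $\tau$, whose toric variety is $V(\tau)$, are both correct. But the central geometric step fails as stated. The subvarieties $V(\sigma)$ and $V(\sigma')$ are orbit closures, hence invariant under the action of the whole torus $T$: translating $V(\sigma')$ by a generic one-parameter subgroup, or by any element of $T$ whatsoever, returns $V(\sigma')$ itself. There is no toric ``moving lemma'' of the kind you invoke --- the two invariant cycles share the stratum $V(\tau)$ and cannot be pushed into transverse position by the group, so the claim that only pairs with $\sigma \cap (\sigma' + v) \neq \emptyset$ contribute, each with a single transverse point of multiplicity $[N : N_{\sigma} + N_{\sigma'}]$, has no support in your setup. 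The actual proof in \cite{fultonsturm} circumvents exactly this obstruction by moving something that is \emph{not} torus-invariant: the diagonal $\Delta \subseteq X_{\Sigma} \times X_{\Sigma}$, which is invariant only under the diagonal subtorus. One translates $\Delta$ by $(1, \lambda_{v}(t))$ for the one-parameter subgroup $\lambda_{v}$ attached to the generic vector $v$ and takes the flat limit as $t \to 0$; the limit cycle is invariant under the full torus $T \times T$, hence a sum of products $V(\sigma) \times V(\sigma')$, and the combinatorics of the degeneration show that the pair $(\sigma, \sigma')$ occurs precisely when $\sigma \cap (\sigma' + v) \neq \emptyset$, with multiplicity $[N : N_{\sigma} + N_{\sigma'}]$. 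Evaluating $c \times d$ against this rationally equivalent replacement of $\Delta_{*}[V(\tau)]$ yields the displacement formula; independence of $v$ then follows a posteriori, as you say.

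A secondary gap: your phrase ``an honest intersection number on $V(\tau)$'' tacitly assumes the cocycles $c$ and $d$ are represented by cycles of complementary dimension inside $V(\tau)$. On a complete but possibly singular toric variety Poincar\'e duality fails, and Chow cohomology classes need not be representable by cycles at all; this is why \cite{fultonsturm} work throughout in the bivariant formalism and compute via the degenerated diagonal rather than by intersecting representing cycles. So both the properness/transversality claim and the representability claim need the diagonal trick; without it the argument does not close.
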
 
For a proof see \cite[Proposition 3.1 (a)]{fultonsturm}. This product is associative, commutative, with identity and turns the direct sum of $\Z$-moduls
\begin{eqnarray*}
\M^{*}(\Sigma) := \bigoplus_{k= 0, \ldots ,r} \M^{k}(\Sigma)
\end{eqnarray*}
into a graded, associative and commutative $\Z$-algebra with identity.

\section{Stable intersection product for tropical cycle}

Before we review the definition of the intersection product for tropical cycle we will define tropical cycles, following \cite{allermann}, \cite{intersection} and \cite{gubler} in the real vector space $N_\R$.
\begin{definition}
\begin{enumerate}
		\item A integral $\R$-affine polyhedral polyhedral complex $\mathscr{C}$ of pure dimension $n$ is called \emph{weighted}, if it is endowed with a \emph{integral weight function} m, which maps every $\sigma \in \mathscr{C}_n$ to a number $m_{\sigma} \in \Z$. The \emph{support} of $\mathscr{C}$ is defined as
		\begin{eqnarray*}
			|\mathscr{C}| := \bigcup_{\substack{\sigma \in \mathscr{C}_n :\\ m_{\sigma} \not = 0}}  \sigma.
		\end{eqnarray*} And called \emph{complete} if $|\mathscr{C}| = N_{\R}$. 
		\item Let $(\mathscr{C},m)$ be a weighted, integral $\R$-affine polyhedral complex of pure dimension $n$ in $N_{\R}$. For each codimension one face $\tau$ of a polyhedron $\sigma \in \mathscr{C}$ we choose a representative $\omega_{\sigma, \tau} \in N_{\sigma}$ of the generator of the one-dimensional lattice $N_{\sigma} / N_{\tau}$ pointing in the direction of $\sigma$. We call this vector a \emph{normal vector} of $\sigma$ in direction $\tau$. Then we say that the weighted polyhedral complex $\mathscr{C}$ satisfies the \emph{balancing condition}, if for every $\tau \in \mathscr{C}_{n-1}$ we have
	\begin{eqnarray*}
		\sum_{\substack{\sigma \in \mathscr{C}_{n}: \\ \tau \prec \sigma}} m_{\sigma} \cdot \omega_{\sigma, \tau} \in N_{\tau}.
	\end{eqnarray*}
	
	\item Let $C := (\mathscr{C}, m)$ be a weighted, integral $\R$-affine polyhedral complex of dimension $n$, which satisfies the balancing condition. Given an integral, $\R$-affine subdivision $\mathscr{C}'$ of $\mathscr{C}$ there is a unique weight function $m'$ such that $(\mathscr{C}',m')$ again satisfies the balancing condition and $m_{\sigma}|_{\sigma'}= m'_{\sigma'}$ holds for all $\sigma' \in \mathscr{C}'_n $ and $\sigma \in \mathscr{C}_n$ such that  $\sigma' \subseteq \sigma$.
 In the following we identify two weighted polyhedral complexes, which satisfy the balancing condition, $(\mathscr{C}, m)$ and $(\mathscr{C}', m')$ of dimension $n$, if and only if, if there exists a common subdivision $\mathscr{D}$ of $\mathscr{C}$ and $\mathscr{C}'$ such that the weight functions induced by $m$ and $m'$ on all polyhedra in $\mathscr{D}_n$ agree. 
A \emph{tropical cycle} $C := (\mathscr{C}, m)$ of dimension $n$ in $N_{\R}$ is an equivalence class of an integral, $\R$-affine and weighted polyhedral complex $\mathscr{C}$ in $N_{\R}$ of pure dimension $n$, which satisfies the balancing condition. (See Figure \ref{fig:einleitung} for an example.) If a tropical cycle $C$ in $N_{\R}$ is defined by a weighted integral, $\R$-affine polyhedral complex $ (\mathscr{C}, m)$, then we call $\mathscr{C}$ a \emph{polyhedral complex of definition for the tropical cycle} $C$. A tropical cycle $C := (\mathscr{C}, m)$, whose underlying polyhedral complex consists of strictly convex rational polyhedral cones in $N_{\R}$ is called \emph{tropical fan}.
	
\item The set of tropical cycle of pure dimension $n$ in $N_{\R}$ defines an abelian group $\TZ_n(N_{\R})$, where the group law is given by the addition of weight functions on a common refinement of the integral $\R$-affine polyhedral complex. We denote by $\TZ^{k}(N_{\R}) := \TZ_{r-k}(N_{\R})$ the \emph{group of tropical cycles of codimension k}.

\item An operation on tropical cycles, which mirrors the local structure, is that of taking stars. Let $C$ a tropical cycle with polyhedral complex of definition $\mathscr{C}$. Let $\tau \in \mathscr{C}$. We denote by $\overline{\sigma}$ the cones in the images of the polyhedra of $\mathscr{C}$ containing $\tau$ under the canonical projection map $N_{\R} \to N_{\R}/ \mathbb{L}_{\tau}$. And define the \emph{star} of $C$ at $\tau$ to be the rational polyhedral fan in $N_{\R}/ \mathbb{L}_{\tau}$ given by
$$
\st_{\mathscr{C}}(\tau):= \big\{  \overline{\sigma} \,|\, \sigma \in \mathscr{C} \text{ with } \tau \prec \sigma \big\}.
$$
If we equip the $\st_{\mathscr{C}}(\tau)$ with the induced weights and since the projection respects lattice normal vectors, the star of $\mathscr{C}$ at $\tau$ will be a tropical fan, which we will denote by $\st_{C}(\tau)$. We also note that the projection map preserves the codimension of the polyhedra.
\end{enumerate}
\end{definition}

\begin{figure}[h]

\begin{center}
    \begin{tikzpicture}
    \begin{scope}[color = gray ,text width=2em,scale=0.6,font={\scriptsize},line width=1pt]
    \draw[]  (0,0) -- (0,-2);
    \draw[]  (0,0) -- (2,0);
    \draw[]  (0,0) -- (0,-2);
    \draw[]  (2,0) -- (2,-2);
    \draw[]  (2,0) -- (4,2);
    \draw[]  (0,0) -- (-2,2);
    \draw[]  (-2,2) -- (-4,2);
    \draw[]  (-2,2) -- (-2,4);
    \draw[]  (-2,4) -- (-4,4);
    \draw[]  (-2,4) -- (0,6);
    
    \end{scope}

    \begin{scope}[color = black ,text width=2em,scale=0.6,font={\scriptsize},line width=1pt]
    \draw[->]  (0,0) -- (0,-0.7);
    \draw[->]  (0,0) -- (-0.5,0.5);
    \draw[->]  (0,0) -- (0.7,0); 
    
     \node[left] at (0,-0.7) {$\binom{0}{-1}$};
    \node[below] at (0.7,0) {$\tbinom{1}{0}$};
    \node[right] at (-1,1) {$\tbinom{-1}{1}$};
 
    \draw[->]  (2,0) -- (2.5,0.5);
    \draw[->]  (2,0) -- (2,-0.7);
    \draw[->]  (2,0) -- (1.3,0); 
    
     \node[below] at (1.5,0) {$\tbinom{-1}{0}$};
    \node[left] at (3,1) {$\tbinom{1}{1}$};
    \node[right] at (2,-0.7) {$\tbinom{0}{-1}$};
 
    \draw[->]  (-2,2) -- (-1.5,1.5);
    \draw[->]  (-2,2) -- (-2.7,2);
    \draw[->]  (-2,2) -- (-2,2.7); 
    
     \node[right] at (-2,2.5) {$\binom{0}{1}$};
    \node[below] at (-2.7,2) {$\tbinom{-1}{0}$};
    \node[left] at (-1,1) {$\tbinom{1}{-1}$};
    
     \draw[->]  (-2,4) -- (-2.7,4);
    \draw[->]  (-2,4) -- (-2,3.3);
    \draw[->]  (-2,4) -- (-1.5,4.5); 
    
     \node[right] at (-2,3.5) {$\binom{0}{-1}$};
    \node[above] at (-2.7,4) {$\tbinom{-1}{0}$};
    \node[above] at (-1.5,4.5) {$\tbinom{1}{1}$};
    \end{scope}

   \end{tikzpicture} 
 \end{center}

  \caption{A tropical cycle in $\R^{2}$.}
  \label{fig:einleitung}
\end{figure}
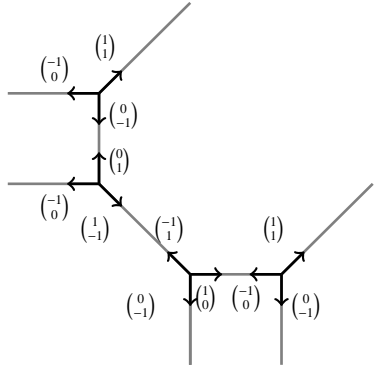

Now we can define the intersection product for tropical cycles following \cite{ap}:
\begin{definition}\label{def:intersectionproduct}
Let $C_1$ and $C_2$ be tropical cycle of codimension $k_1$ and $k_2$ in $N_{\R}$. Then there exists a natural pairing 
	\begin{eqnarray*}
			.  \, \colon  \TZ^{k_1}(N_{\R}) \times \TZ^{k_2}(N_{\R}) & \to & \TZ^{k_1+k_2}(N_{\R})\\
			(C_{1}, C_{2}) & \mapsto & C_{1}.C_{2},	
	\end{eqnarray*}
which is called the \emph{stable intersection product for tropical cycles}. This pairing is bilinear, associative, commutative and respects supports in the sense that $|C_1 . C_2| \subseteq |C_1| \cap |C_2|$. It induces a graded, associative and commutative $\Z$-algebra with identity 
$$\TZ^{*}(N_{\R}):= \bigoplus_{k=0, \ldots, r} \TZ^{k}(N_{\R}),$$
 where the scalar multiplication acts on the multiplicities. Concretely this intersection pairing is given by the \emph{local fan displacement rule} adapted from \cite{fultonsturm}, which we explain in the following: We choose a common polyhedral complex of definition $\mathscr{C}$ for the tropical cycles  $C_1$ and $C_2$ and write $C_i := (\mathscr{C},m_i)$ (i = 1,2) for the weight functions $m_i := (m_{i,\sigma})_{\sigma \in \mathscr{C}^{k_i}}$. Now let $\mathscr{D}$ denote the polyhedral subcomplex of $\mathscr{C}$, which is generated by $\mathscr{C}^{k_1+k_2}$.
In order to define a well-defined weight function on $\mathscr{D}$, we need to make sure that the tropical cycles intersect transversal. For this we choose a generic vector $v \in N_{\R}$ (cf. \cite[Sec. 3]{fultonsturm} for the precise definition) and a small $\varepsilon > 0$ and now equip $\mathscr{D}$ with the integral weight function $m := (m_{\tau})_{\tau \in \mathscr{D}^{k_1+k_2}}$ given by
$$m_{\tau} := \sum_{\substack{(\sigma_1, \sigma_2) \in \mathscr{C}^{k_1} \times \mathscr{C}^{k_2} : \\ \tau = \sigma_1 \cap \sigma_2 }} m^{\tau}_{\sigma_1, \sigma_2}\cdot m_{1,\sigma_1}  \cdot m_{2,\sigma_2},
	$$where
	\begin{eqnarray*}
	m^{\tau}_{\sigma_1, \sigma_2}  := \begin{cases}   [N : N_{\sigma_1} + N_{\sigma_2}] , & \text{ if } \sigma_1 \cap (\sigma_2 + \varepsilon \cdot v) \not = \emptyset  \text{ and } \\
0, & \text{ otherwise.}\end{cases}
	\end{eqnarray*}
	Then $D:=(\mathscr{D},m)$ is a tropical cycle of codimension $k_1 +k_2$, whose construction is independent of the choice of the generic vector $v$ and a sufficiently small $\varepsilon \in \R_{>0}$ and $\mathscr{D}$ is a polyhedral complex of definition for the stable intersection product $C_1 . C_2$.
\end{definition}
\begin{claim}Definition \ref{def:intersectionproduct} is well-defined and satisfies the claimed properties.
\end{claim}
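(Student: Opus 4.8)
The plan is to reduce everything to the level of tropical fans and then transport the assertions from the already established algebra structure on Minkowski weights, via Proposition \ref{prop:isomorphism} and Proposition \ref{prop:fandisplacementrule}. The crucial observation is that the weight $m_\tau$ in Definition \ref{def:intersectionproduct} is a \emph{local} quantity: it only involves pairs $(\sigma_1,\sigma_2)$ with $\tau=\sigma_1\cap\sigma_2$, hence it depends only on the two stars $\st_{C_1}(\tau)$ and $\st_{C_2}(\tau)$ in the quotient $N_\R/\mathbb{L}_\tau$. First I would record the dimension count: if $\tau\in\mathscr{D}^{k_1+k_2}$ then $\dim\tau=r-(k_1+k_2)$, so in $N_\R/\mathbb{L}_\tau$ (of dimension $k_1+k_2$) the star $\st_{C_1}(\tau)$ is a tropical fan of codimension $k_1$ and $\st_{C_2}(\tau)$ one of codimension $k_2$. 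Under the projection $N_\R\to N_\R/\mathbb{L}_\tau$ the lattice indices $[N:N_{\sigma_1}+N_{\sigma_2}]$ and the displacement condition $\sigma_1\cap(\sigma_2+\varepsilon v)\neq\emptyset$ are preserved for sufficiently small $\varepsilon$, so that $m_\tau$ equals the value at the zero cone $\{0\}=\overline{\tau}$ of the product of the two stars computed by the fan displacement rule.

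Next I would complete the picture combinatorially. Choosing a complete rational polyhedral fan $\Sigma$ in $N_\R/\mathbb{L}_\tau$ refining and containing the finitely many cones of both stars, the stars become Minkowski weights $c\in\M^{k_1}(\Sigma)$ and $d\in\M^{k_2}(\Sigma)$, and by Proposition \ref{prop:fandisplacementrule} the value $(c\smallsmile d)(\{0\})$ is given by exactly the same displacement sum that defines $m_\tau$. Thus $m_\tau=(\st_{C_1}(\tau)\smallsmile\st_{C_2}(\tau))(\{0\})$. Since by Proposition \ref{prop:isomorphism} Minkowski weights are identified with Chow cohomology classes of the complete toric variety $X_\Sigma$, the local tropical product is identified with the cup product in $A^*(X_\Sigma)$.

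From this identification the claimed properties follow. Independence of the generic vector $v$ and of sufficiently small $\varepsilon$ is precisely the well-definedness of the fan displacement rule in Proposition \ref{prop:fandisplacementrule}, once $\varepsilon$ is small enough that the displacement only sees the local cone structure at $\tau$. That $D=(\mathscr{D},m)$ again satisfies the balancing condition, hence is a genuine tropical cycle, follows because each star product is a Minkowski weight and the balancing condition for $m$ at a codimension-$(k_1+k_2+1)$ face of $\mathscr{D}$ translates into the balancing condition of the corresponding product Minkowski weight; equivalently it is inherited from the fact that the cup product lands in $\M^{k_1+k_2}(\Sigma)$. Bilinearity, associativity, commutativity and the existence of an identity (the fundamental cycle $N_\R$, corresponding to $1\in A^0(X_\Sigma)$) are transported star by star from the graded $\Z$-algebra structure on $\M^*(\Sigma)$ recorded after Proposition \ref{prop:fandisplacementrule}. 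Refinement-invariance, which makes the product well defined on the equivalence classes of Definition \ref{def:intersectionproduct}, holds because stars are unchanged under subdivision. Finally the support statement $|C_1.C_2|\subseteq|C_1|\cap|C_2|$ is immediate: $m_\tau\neq 0$ forces the existence of $\sigma_1,\sigma_2$ with $\tau=\sigma_1\cap\sigma_2$ and nonzero weights, whence $\tau\subseteq|C_1|\cap|C_2|$.

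The main obstacle I anticipate is the careful matching in the first two steps: one must verify that projecting to $N_\R/\mathbb{L}_\tau$ sends normal vectors to normal vectors and preserves the lattice indices $[N:N_{\sigma_1}+N_{\sigma_2}]$, and that a generic displacement vector downstairs lifts to a generic one upstairs and conversely, so that the two displacement sums agree verbatim. Equally delicate is ensuring that a \emph{single} choice of $v$ and $\varepsilon$ works simultaneously for all the finitely many faces $\tau\in\mathscr{D}^{k_1+k_2}$, and that completing the local stars to a complete fan $\Sigma$ does not affect the resulting product. Both points reduce to finiteness and genericity arguments, but they require the bookkeeping above to be stated cleanly.
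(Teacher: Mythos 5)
Your overall strategy is the same as the paper's: localize at a face $\tau \in \mathscr{D}^{k_1+k_2}$ via the star construction, check that passing to $N_\R/\mathbb{L}_\tau$ preserves the displacement condition and the lattice indices (the paper verifies $\big[N/N_\tau : (N/N_\tau)_{\overline{\sigma_1}} + (N/N_\tau)_{\overline{\sigma_2}}\big] = [N : N_{\sigma_1}+N_{\sigma_2}]$ with the second and third isomorphism theorems, and lifts the generic vector exactly as you propose), identify the stars with Minkowski weights on a complete fan and hence, via Proposition \ref{prop:isomorphism}, with Chow cohomology classes, and then transport bilinearity, associativity, commutativity, the identity and balancedness from the graded ring $\M^{*}(\Sigma)$ through Proposition \ref{prop:fandisplacementrule}. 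Up to this point your proposal reproduces the paper's argument.

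There is, however, one genuine gap: the independence of the construction from the chosen complete fan $\Sigma$ into which the stars are completed, equivalently its compatibility with refinements. You correctly flag this in your closing paragraph, but your proposed resolution --- that it ``reduces to finiteness and genericity arguments'' --- misdiagnoses the problem. Genericity of $v$ and smallness of $\varepsilon$ are already part of the well-definedness in Proposition \ref{prop:fandisplacementrule} for a \emph{fixed} complete fan; the real issue is that two different completions give a priori different Minkowski weight rings, and one must show the cup products agree after passing to a common refinement. Your remark that ``stars are unchanged under subdivision'' does not address this either, since what varies is the ambient complete fan, not the star. The paper closes this step with an actual argument: a refinement $\Sigma'$ of a complete fan $\Sigma$ induces a proper birational toric morphism $f \colon X_{\Sigma'} \to X_{\Sigma}$, and the projection formula together with birationality gives $f_{*}\big(f^{*}c \frown [X_{\Sigma'}]\big) = c \frown [X_{\Sigma}]$, hence $f_{*}(f^{*}c)=c$ and $f^{*}$ is injective; injectivity of the (multiplicative) pullback $f^{*}\colon A^{k}(X_{\Sigma}) \to A^{k}(X_{\Sigma'})$ then forces the products computed on $\Sigma$ and $\Sigma'$ to coincide. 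Without this step (or a direct combinatorial lemma showing the displacement sum at the zero cone is insensitive to subdividing the fan), your assertion that completing the local stars ``does not affect the resulting product'' is unsupported, and with it the well-definedness of $C_1 . C_2$ on the equivalence classes of Definition \ref{def:intersectionproduct}.
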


\begin{proof} 
We will follow the proof given in \cite{intersection} to a great extend. Since the construction of the stable intersection product is local, it is enough to proof the well-definedness in a neighborhood of $\tau\in \mathscr{D}$. After dividing out the vector space $\mathbb{L}_{\tau}$ and using the star construction we get tropical fans $\st_{C_{i}}(\tau)$ in the quotient vector space $N_{\R}/\mathbb{L}_{\tau}$. Choosing a rational polyhedral fan of definition $\st_{\mathscr{C}_i}(\tau)$ for $\st_{C_{i}}(\tau)$ we can assume that $\st_{\mathscr{C}_i}(\tau)$ are Minkowski weights in a complete fan $\Sigma$  (cf. \cite[Lem. 1.1.19]{fran} for details). We know by proposition \ref{prop:isomorphism} that the Minkowski weights $\st_{\mathscr{C}_i}(\tau)$ correspond to cocycles $c_{i} $ in the Chow cohomology groups $A^{k_i}(X_{\Sigma})$. Now we can compute the product of these cocycles in terms of the fan displacement rule (cf. proposition \ref{prop:fandisplacementrule}), which is again a Minkowski weight and so also a well-defined tropical cycle. Now it remains to show that the resulting Minkowski weight in $N_{\R}/\mathbb{L}_{\tau}$ coincide with our given weight function $m$:\par
After choosing a generic vector $\overline{v} \in \mathbb{L}_{\sigma}/\mathbb{L}_{\tau}$ we get
\begin{eqnarray*}
 (c_{1} \smallsmile c_{2})(\overline{\tau}) & = &	\sum_{\substack{(\overline{\sigma_1}, \overline{\sigma_2}) \in \st_{\mathscr{C}_1}(\tau)^{k_1} \times \st_{\mathscr{C}_2}(\tau)^{k_2} : \\ \overline{\tau} = \overline{\sigma_1} \cap \overline{\sigma_2} }} m^{\overline{\tau}}_{\overline{\sigma_1}, \overline{\sigma_2}} \cdot c_{1,\overline{\sigma_1}}  \cdot c_{2,\overline{\sigma_2}}\\
&=&\sum_{\substack{(\overline{\sigma_1}, \overline{\sigma_2}) \in \st_{\mathscr{C}_1}(\tau)^{k_1} \times \st_{\mathscr{C}_2}(\tau)^{k_2} : \\ \overline{\tau} = \overline{\sigma_1} \cap \overline{\sigma_2} }}    \Big[N/ N_{\tau}: \big(N/N_{\tau}\big)_{\overline{\sigma_{1}}}+ \big(N/N_{\tau}\big)_{\overline{\sigma_{2}}}\Big]  \cdot c_{1,\overline{\sigma_{1}}}\cdot c_{2,\overline{\sigma_{2}}}\\
	 &=&  \sum_{\substack{(\sigma_1, \sigma_2) \in \mathscr{C}_1^{k_1} \times \mathscr{C}_2^{k_2} : \\ \tau = \sigma_1 \cap \sigma_2 }}  [N: N_{\sigma}+ N_{\sigma'}] \cdot m_{1,\sigma_{1}} \cdot m_{2,\sigma_{2}}\\
	 & = & m_{\tau},
 \end{eqnarray*}
	where we used that with $\overline{\sigma_{1}} \cap (\overline{\sigma_{2}} + \overline{v}) \not = \emptyset $ it follows that $\sigma_{1} \cap (\sigma_{2} + v) \not = \emptyset $ for a preimage $v$ of $\overline{v}$ and
	\begin{eqnarray}
	\Big[N/ N_{\tau}: \big(N/N_{\tau}\big)_{\overline{\sigma_{1}}}+ \big(N/N_{\tau}\big)_{\overline{\sigma_{2}}}\Big]  &=&\Big|\big(N/ N_{\tau}\big)\Big/ \Big(\big(N/N_{\tau}\big)_{\overline{\sigma_{1}}}+ \big(N/N_{\tau}\big)_{\overline{\sigma_{2}}}\Big)\Big| \notag \\ 
	&=& \Big|(N/ N_{\tau})\Big/ \big(N_{\sigma_{1}}+ N_{\sigma_{2}}\big)\big/N_{\tau}\Big| \notag \\
	&=& \big|N / (N_{\sigma_{1}}+ N_{\sigma_{2}})\big| \label{eq:well:1},\\
	&=&\big[N: N_{\sigma_{1}}+ N_{\sigma_{2}}\big]\label{eq:well:2},
	\end{eqnarray}
where \eqref{eq:well:1} and \eqref{eq:well:2} follows with the second and third isomorphism theorem. The last step is to see that the above is independend of the choosen fan of definition. For this let $\Sigma'$ be a refinement of a complete fan $\Sigma$ in $N_{\R}$. Let 
$$
f \colon X_{\Sigma'} \to X_{\Sigma}
$$
be the induced morphism of the corresponding toric varieties. This morphism induces the homomorphism 
$$
f^{*} \colon A^{k}(X_{\Sigma}) \to A^{k}(X_{\Sigma'}),
$$of the corresponding Chow cohomology groups, see \cite[p.324]{fulton}. Now we will show that $f^{*}$ is injecitve. To proof this let $c \in A^{k}(X_{\Sigma})$ be a cocycle and $[X_{\Sigma'}] \in A^{k}(X_{\Sigma'})$ be the fundamental cocycle. Using that $f$ is birational the projection formula \cite[p. 324]{fulton} yields
$$
f_{*}\big(f^{*} c \frown [X_{\Sigma'}]\big) = c \frown [X_{\Sigma}]
$$
and so the injecitvity of $f^{*}$ follows, since
$$
f_{*}(f^{*} c) = c.
$$ The injectivity of $f^{*}$ shows that building the intersection product of tropical cycles does not depend on the chosen fan of definition.\par 
Finally the remaining claimed properties of the stable intersection product follow from the properties of the Chow cohomology groups (cf. \cite[Ch. 17]{fulton}).
\end{proof}

\begin{remark}\label{re:verfeinerung}
In the following we will refer to the process of interpreting tropical fans as Minkow\-ski weights and as Chow cohomology classes as \emph{standard procedure}. 
\end{remark}

\begin{example}
We will illustrate the stable intersection product by computing the stable intersection of two tropical curves. The polyhedral complexes of definition of the given curves $C_1$ and $C_2$ are pictured in figure  \ref{fig:intersection} on the left side. The picture on the left side should be read that the most parts of the supports of the curves lie on top of each other. Furthermore we assume that the weight on each ray is 1 except where indicated. The next step is to choose a generic vector, which is the same as to shift the curve $\mathscr{C}_2$ a little bit such that the intersection is now transversal. This is pictured in figure \ref{fig:intersection} in the middle. Now we can read of the weights of the intersection cycle $\mathscr{D}$ with the fan displacement rule from the picture in the middle. In the picture on the right hand side we see the resulting intersection cycle $D$ of the two given curves $C_1$ and $C_2$.

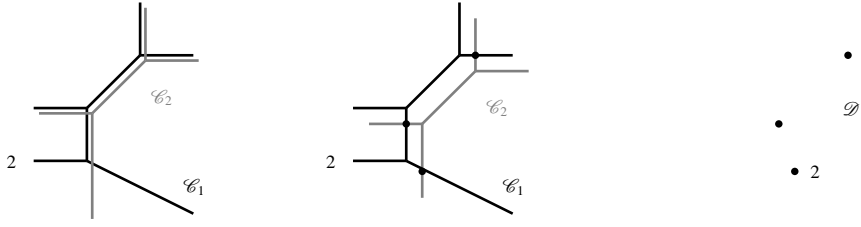
\begin{figure}[h]
  \centering

    \begin{tikzpicture} 
       
     \begin{scope}[color = black ,text width=2em,scale=0.7,font={\scriptsize},line width=1pt]
    \draw[]  (0,0) -- (2,-1);
    \draw[]  (0,0) -- (0,1);
    \draw[]  (0,0) -- (-1,0); 
    \draw[]  (0,1) -- (-1,1); 
    \draw[]  (0,1) -- (1,2); 
    \draw[]  (1,2) -- (2,2); 
    \draw[]  (1,2) -- (1,3); 
     \node[left] at (3,-0.5) {$\mathscr{C}_1$};
      \node[below] at (-1,0.3) {$2$};
       \end{scope}
     
     \begin{scope}[color = gray ,text width=2em,scale=0.7,font={\scriptsize},line width=1pt]
    \draw[]  (1.1,1.9) -- (2.1,1.9); 
    \draw[]  (1.1,1.9) -- (1.1,2.9); 
    \draw[]  (0.1,0.9) -- (1.1,1.9); 
     \draw[]  (0.1,0.9) -- (-0.9,0.9); 
      \draw[]  (0.1,0.9) -- (0.1,-1.1); 
    \node[right] at (1,1.2) {$\mathscr{C}_2$};
    \end{scope}
    
    
       \begin{scope}[color = gray ,text width=2em,scale=0.7,font={\scriptsize},line width=1pt]
    \draw[]  (7.3,1.7) -- (8.3,1.7); 
    \draw[]  (7.3,1.7) -- (7.3,2.7); 
    \draw[]  (6.3,0.7) -- (7.3,1.7); 
     \draw[]  (6.3,0.7) -- (5.3,0.7); 
      \draw[]  (6.3,0.7) -- (6.3,-0.7); 
    \node[right] at (7.3,1) {$\mathscr{C}_2$};
    \end{scope}

         \begin{scope}[color = black ,text width=2em,scale=0.7,font={\scriptsize},line width=1pt]
    \draw[]  (6,0) -- (8,-1);
    \draw[]  (6,0) -- (6,1);
    \draw[]  (6,0) -- (5,0); 
    \draw[]  (6,1) -- (5,1); 
    \draw[]  (6,1) -- (7,2); 
    \draw[]  (7,2) -- (8,2); 
    \draw[]  (7,2) -- (7,3); 
     \node[left] at (9,-0.5) {$\mathscr{C}_1$};
      \node[below] at (5,0.3) {$2$};
      \fill (7.3,2) circle (0.07);
      \fill (6,0.7) circle (0.07);
      \fill (6.3,-0.2) circle (0.07);
       \end{scope}
       
    \begin{scope}[color = black ,text width=2em,scale=0.7,font={\scriptsize},line width=1pt]
    \fill (14.3,2) circle (0.07);
      \fill (13,0.7) circle (0.07);
      \fill (13.3,-0.2) circle (0.07);
       \node[right] at (13.4,-0.2) {$2$};
       \node[right] at (14,1) {$\mathscr{D} $};
     \end{scope}

   \end{tikzpicture} 
  \caption{Stable tropical intersection of two tropical curves.}
  \label{fig:intersection}
\end{figure}
\end{example}

\section{Push forward of tropical cycle}

Following \cite{allermann} we define the push forward along an integral $\R$-affine map.
\begin{remark}\label{re:push}
Let $F \colon N_{\R} \to N'_{\R}$ be an integral, $\R$-affine map. There is a natural \emph{push forward morphism}
\begin{eqnarray*}
F_{*} \colon \TZ_n (N_{\R}) & \to & \TZ_n (N'_{\R})\\
C & \mapsto & F_{*}C,
\end{eqnarray*}
which satisfies $|F_{*}C' | \subseteq F(|C|)$. It is given as follows: Let $C$ in $\TZ_n (N_{\R})$ be a tropical cycle. After a suitable refinement we can assume that 
\begin{eqnarray*}\label{eq:push}
F_{*}\mathscr{C}  := \big \{F(\tau) \big| \, \exists \, \sigma \in \mathscr{C}_n \text{ such that } \tau \prec \sigma \text{ and } F|_{\sigma} \text{ is injective} \big\}
\end{eqnarray*}
 is a polyhedral  complex in $N'_{\R}$. We equip $F_{*}\mathscr{C} $ with the weight function 
 $$
 m'_{\sigma'} := \sum_{\substack{\sigma \in \mathscr{C}_n:\\ F(\sigma)= \sigma'}} m_{\sigma}  \cdot \big[N'_{\sigma'}: \mathbb{L}_{F}(N_{\sigma})\big],
 $$ for $\sigma' \in F_{*}\mathscr{C}_n$, where $\mathbb{L}_{F}$ denotes the linear morphism defined by the affine morphism $F$. The weighted, integral and $\R$-affine polyhedral complex $F_{*}C := (F_{*}\mathscr{C}, m')$ in $N'_{\R}$ of pure dimension $n$ is called the \emph{direct image of $C$ under $F$}.
One verifies that $F_{*}C$ is again a tropical cycle of dimension $n$. The formation of $F_{*}$ induces a functorial homomorphism of the tropical cycle groups. For further details see \cite[Sec. 7]{allermann}.
\end{remark}

We illustrate the push forward of a tropical cycle.

\begin{example}\label{ex:projection}
For the projection map to the first coordinate $ \Z	^{2} \to \Z$ let
\begin{eqnarray*}
F \colon \R^{2} & \to & \R \\
(x,y) & \mapsto & x,
\end{eqnarray*}
be the induces map of vector spaces. We will calculate the push forward $F_{*}C$ of the tropical curve $C$ given in figure \ref{fig:pushforward}. First we remark that the given polyhedral complexes of definition $\mathscr{C}$ and $\mathscr{D}$ in figure \ref{fig:pushforward} are compatible with respect to $F$. Furthermore we assume that the weight  on each ray is 1 except where indicated.
\begin{figure}[h]
  \centering

    \begin{tikzpicture}

     \begin{scope}[color = black ,text width=2em,scale=0.7,font={\scriptsize},line width=1pt]
    \draw[]  (0,0) -- (2,-1);
    \draw[]  (0,0) -- (0,1);
    \draw[]  (0,0) -- (-1,0); 
    \draw[]  (0,1) -- (-1,1); 
    \draw[]  (0,1) -- (1,2); 
    \draw[]  (1,2) -- (2,2); 
    \draw[]  (1,2) -- (1,3); 
    
    \draw[]  (7,1) -- (10.5,1); 
    
    \fill (0,0) circle (0.07);
     \fill (1,2) circle (0.07);
     \fill (0.9,-0.45) circle (0.07);
    
    \fill (8,1) circle (0.07);
    \fill (9,1) circle (0.07);

    \draw[->]  (4,1) -- (5,1); 
    \node[above] at (4.9,1) {$F$};
    
    \node[below] at (0,0) {$2$};
    \node[right] at (1.2,-0.4) {$\sigma_{2}$};
     \node[above] at (1.9,2) {$\sigma_{1}$};
     \node[left] at (1,2) {$\mathscr{C}$};
    \node[above] at (10,1) {$\sigma'$};
    \node[right] at (11,1) {$\mathscr{D}$};

    \end{scope}
    
   \end{tikzpicture} 
  \caption{Push forward of a tropical curve.}
  \label{fig:pushforward}
\end{figure}
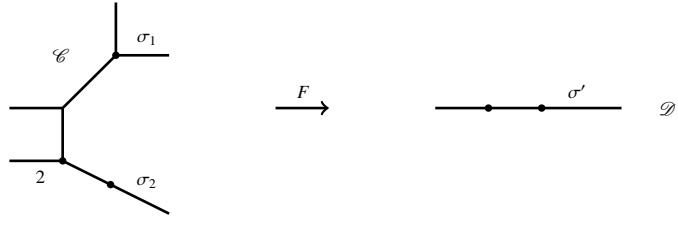
Now we get $F_{*} \mathscr{C} = \mathscr{D}$. And the weight on every facet is $3$, since for example we have
\begin{eqnarray*}
m'_{\sigma'} &=&  |\Z / \Z| \cdot m_{\sigma_{2}} + |\Z / 2 \Z| \cdot m_{\sigma_{1}}  \\
&= & 1+2 = 3.
\end{eqnarray*}
\end{example}

Now we proof the claim, which was made in \cite[Sec. 4]{katz}: 

\begin{proposition}\label{prop:katz}
Let $F \colon N_{\R} \to N'_{\R}$ be an integral linear map and $C$ a tropical fan in $N_{\R}$. Then there exists a projective toric morphism between smooth and projective toric varieties  $f \colon X_{\Sigma} \to X_{\Sigma'}$ such that the push forward in the Chow cohomology along this morphism coincides with the push forward of tropical fans of remark \ref{re:push}.
\end{proposition}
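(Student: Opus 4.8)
The plan is to build a toric model in which $F$ becomes a morphism of smooth projective toric varieties, to translate $C$ and $F_*C$ into Chow cohomology by the standard procedure of Remark \ref{re:verfeinerung}, and then to identify the two push-forwards cone by cone.

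\textbf{Construction of the fans and the morphism.} Let $\phi\colon N \to N'$ be the integral linear map underlying $F$, so that $\mathbb{L}_F = \phi_{\R}$. I would begin with an arbitrary complete, regular and projective fan $\Sigma'$ in $N'_{\R}$ (such a fan exists). The preimages $\phi_{\R}^{-1}(\sigma')$ of the cones $\sigma' \in \Sigma'$ form a complete fan in $N_{\R}$ whose cones contain lines as soon as $\ker \phi_{\R} \neq 0$. I would refine it to a complete fan of strictly convex cones that simultaneously refines a fan of definition of $C$, and then pass to a regular projective refinement $\Sigma$. By construction $F$ maps every cone of $\Sigma$ into a cone of $\Sigma'$, so $\phi$ induces a toric morphism $f \colon X_{\Sigma} \to X_{\Sigma'}$; as source and target are projective, $f$ is automatically a projective morphism between smooth projective toric varieties. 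Since $\Sigma$ is a fan of definition for $C$, Remark \ref{re:verfeinerung} and Proposition \ref{prop:isomorphism} turn the tropical fan $C$ of codimension $k$ into a class $c \in \M^{k}(\Sigma) \cong A^{k}(X_{\Sigma})$, and likewise turn $F_*C$, of codimension $k' := r'-r+k$ in $N'_{\R}$, into a class $c' \in \M^{k'}(\Sigma') \cong A^{k'}(X_{\Sigma'})$.

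\textbf{Reduction to a comparison of Minkowski weights.} As $X_{\Sigma}$ and $X_{\Sigma'}$ are smooth and complete, Poincaré duality identifies Chow cohomology with Chow homology, and the Chow cohomology push-forward $f_* \colon A^{k}(X_{\Sigma}) \to A^{k'}(X_{\Sigma'})$ is the transport along these identifications of the proper push-forward on cycles; in particular $f_*c$ is the unique class with $f_*c \frown [X_{\Sigma'}] = f_*\big(c \frown [X_{\Sigma}]\big)$. It therefore suffices to prove $f_*c = c'$ in $\M^{k'}(\Sigma') = \Hom(A_{k'}(X_{\Sigma'}), \Z)$, and for this I would evaluate both sides on the generators $[V(\sigma')]$, where $\sigma'$ runs over the cones of $\Sigma'$ of codimension $k'$, that is of dimension $d := r-k$. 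By the projection formula, evaluating $f_*c$ on $[V(\sigma')]$ amounts to evaluating $c$ on the refined Gysin pull-back $f^{!}[V(\sigma')] \in A_{k}(X_{\Sigma})$, which exists since $f$ is a morphism of smooth varieties.

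\textbf{Cone-by-cone identification.} For a torus-invariant cycle this Gysin pull-back is a combination $\sum_{\sigma} [N'_{\sigma'} : \mathbb{L}_F(N_{\sigma})]\cdot [V(\sigma)]$ over the cones $\sigma \in \Sigma$ with $F|_{\sigma}$ injective and $F(\sigma) = \sigma'$, while the cones on which $F$ drops dimension contribute nothing; the multiplicities are exactly the lattice indices of the tropical push-forward in Remark \ref{re:push}, which I would verify by a lattice-index computation using the second and third isomorphism theorems, as in the proof of well-definedness of the stable intersection product. Since $c$ assigns $m_{\sigma}$ to $[V(\sigma)]$, pairing gives $(f_*c)_{\sigma'} = \sum_{F(\sigma)=\sigma'} m_{\sigma}\cdot [N'_{\sigma'} : \mathbb{L}_F(N_{\sigma})] = m'_{\sigma'} = c'_{\sigma'}$, which is the asserted coincidence.

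\textbf{Main obstacle.} I expect the decisive difficulty to be the book-keeping forced by Poincaré duality: the tropical push-forward preserves dimension whereas on Chow cohomology the codimension is shifted by $r'-r$, so one must match the cones $\sigma$ contributing to $m'_{\sigma'}$ with precisely the strata on which $f$ is generically finite, carrying the \emph{same} multiplicity $[N'_{\sigma'} : \mathbb{L}_F(N_{\sigma})]$ on both sides. A more technical but routine point is to secure a fan $\Sigma$ that is at once a refinement of the preimage fan and of a fan of definition of $C$, and in addition regular, complete and projective; this rests on the existence of projective toric resolutions.
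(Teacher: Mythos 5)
Your proposal follows essentially the same route as the paper: pass to a smooth projective toric model compatible with $F$, turn $C$ and $F_{*}C$ into Chow cohomology classes via the standard procedure, characterize the cohomological push-forward through Poincar\'{e} duality by $f_{*}c \smallfrown [X_{\Sigma'}] = f_{*}\big(c \smallfrown [X_{\Sigma}]\big)$, and evaluate on the torus-invariant classes $[V(\sigma')]$ using the projection formula and the identity $f^{*}\big([V(\sigma')]\big) = \sum_{F(\sigma)=\sigma'} \big[N'_{\sigma'} : \mathbb{L}_{F}(N_{\sigma})\big]\cdot [V(\sigma)]$. The one step you take largely on faith --- that the Gysin pull-back of $[V(\sigma')]$ carries exactly these lattice-index coefficients --- is the technical core of the paper's proof, established there by factoring $[V(\sigma')]$ into boundary divisor classes $[V(\rho'_{i})]$ (using smoothness of $\Sigma'$) and pulling back the defining characters $\chi^{{e'_{i}}^{*}}$, so that the coefficient $b_{1}\cdots b_{d} = \big[N'_{\sigma'} : \mathbb{L}_{F}(N_{\sigma})\big]$ emerges; your proposed lattice-index computation verifies the index itself but not the decomposition into the $[V(\sigma)]$, which you would still need to supply along these lines.
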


\begin{proof}
Let $C := (\mathscr{C}, m)$ be a tropical fan in $N_{\R}$ of dimension $n$. After a suitable refinement we may assume that $\mathscr{C}$ is a polyhedral complex of definition in a complete fan $\Sigma$. Furthermore we may assume that $\Sigma'$ is a complete fan in $N'_{\R}$ with $F(\sigma) \in \Sigma'$. Applying \cite[Thm. 6.1.18]{cox} we can also assume that the corresponding toric varieties $X_{\Sigma}$ and $X_{\Sigma'}$ are smooth and projective. Since $X_{\Sigma}$ and $X_{\Sigma'}$ are complete the induced morphism of toric varieties 
$$
f \colon X_{\Sigma} \to X_{\Sigma'}
$$
is proper. And since a proper morphism between projective varieties is projective this morphism is a local complete intersection \cite[Sec. 6.6]{fulton}. For a l.c.i. the push forward and pull back of cycles is well-defined \cite[Ch. 17]{fulton}.
With the standard procedure we can identify $C$ with a cocycle $c$ in the Chow cohomology group $A^{r-n}(X_{\Sigma})$.\par
At first we will calculate the push forward $f_{*}c$ in $A^{r'-n}(X_{\Sigma'})$: From the properties of the Chow groups we verify that the cocycle $c$ induces via
 \begin{eqnarray*}
c_f^{(l)} \colon A_{l}(X_{\Sigma'}) & \to & A_{l-(r-n)}(X_{\Sigma})\\
z' & \mapsto & c \smallfrown  f^{*}(z')
\end{eqnarray*}
a bivariante class (cf. \cite[Ch. 17]{fulton}) in $A^{r-n}(X_{\Sigma} \overset{f}{\to} X_{\Sigma'})$. As in \cite[17.2 $(P_2)$]{fulton} we define the push forward as
\begin{eqnarray*}
f_{*} \colon A^{r-n}(X_{\Sigma} \overset{f}{\to} X_{\Sigma'})& \to & A^{r-n}(X_{\Sigma} \overset{\text{id}}{\to} X_{\Sigma})\\
f_{*} c \smallfrown z' & \mapsto & f_{*} (c \smallfrown f^{*}(z')).
\end{eqnarray*}\par
In the next step we show that this push forward coincides with the push forward of $f$ in the case of Chow groups as in \cite[Sec. 1.4]{fulton}. Choose $z' := [X_{\Sigma}]$ to be the fundamental cycle, then by Poincar\'{e}-duality \cite[Cor. 17.4]{fulton} we get
\begin{eqnarray*}
	 f_{*}c \smallfrown X_{\Sigma'} &=& f_{*}(c \smallfrown f^{*} X_{\Sigma'}) \\
	 & = & f_{*}(c \smallfrown X_{\Sigma} ) \in A_{n}(X_{\Sigma'}).
\end{eqnarray*}
This shows that the push forward of Chow cohomology commutes with the induced isomorphism by the Poincar\'{e}-duality and so the both push forwards agree.\par
After identifying cocyles with tropical fans we need to show that $f_{*}c$ and $F_{*}\mathscr{C}$ coincide as tropical fans. Since tropical fans are completely determined by their weight function it is enough to show that those agree. Having closely examine how cycles define tropical fans in \cite[Thm. 1.1.15]{rau}, we see that is enough to show, that for $\sigma' \in \Sigma'_n$, the following holds
\begin{eqnarray*}
m'_{\sigma'} &=& \deg\big( f_{*}c \smallfrown [V(\sigma')] \big),
\end{eqnarray*} 
where $[V(\sigma')]$ is the chow class in $A_{n-r}(X_{\Sigma'})$ (cf. \cite[Sec. 12.5]{cox}) of the torus invariant closed subvariety $V(\sigma')$ corresponding to $\sigma'$ in $X_{\Sigma'}$\cite[Thm. 3.2.6]{cox}. Using the definition of $f_{*}$ and that  the degree map commutes with the push forward \cite[Def. 1.4]{fulton} the right hand side simplifies to 
\begin{eqnarray}\label{eq:deg}
\deg\big( f_{*}c \smallfrown [V(\sigma')] \big) &=&  \deg\bigg( f_{*}\big(c \smallfrown f^{*}([V(\sigma')])\big) \bigg) \notag \\
&=& \deg\bigg( c \smallfrown f^{*}\big([V(\sigma')]\big) \bigg).
\end{eqnarray} \par
Now we describe the occurring pull back: Since $f$ is proper and using the definition of $F_{*}$, we may assume that $V(\sigma)$ maps onto $V(\sigma')$ for $\sigma' \in \Sigma'_n$ and the pull back
 is given by
\begin{eqnarray}\label{eq:pullback}
		f^{*}([V(\sigma')]) = \sum_{\substack{\sigma \in \Sigma_{n}:\\ F(\sigma) = \sigma'} } b_{\sigma} \cdot [V(\sigma)].	
\end{eqnarray}

It remains to calculate the coefficients $b_{\sigma}$: Since $\Sigma $ and $\Sigma'$ are regular, we may assume that $\sigma$ and $\sigma'$ will be generated by the first $n$ base vectors $e_1, \ldots, e_n$ of $N = \Z^r$ and $e'_1, \ldots, e'_n$ of $N'=\Z^{r'}$. From $F(\sigma) = \sigma'$ we deduce that the the induced linear map is given by  
\begin{eqnarray*}
\mathbb{L}_{F}\colon N_{\sigma} &\to& N'_{\sigma}\\
e_i & \mapsto & b_i \cdot e'_i
\end{eqnarray*} 
for $i = 1, \ldots , n$ and $b_i \in \Z$. Since $F$ is injective on $\sigma$, we can interpret $\mathbb{L}_{F}(N_{\sigma})$ as a subgroup of $N'_{\sigma}$ and obtain 
\begin{eqnarray}\label{eq:index}
\big[N'_{\sigma'}: \mathbb{L}_{F}(N_{\sigma})]  &=&  \Bigg|\Z^{d}\Big / \prod_{i = 1}^{d} b_{i} \, \Z\Bigg| \notag\\
& = & \Bigg | \prod_{i = 1}^{d} \Z\big /  b_{i} \, \Z\Bigg| \notag\\
& = & b_{1}\cdots b_{d}. 
\end{eqnarray}\par
In the following we describe $[V(\sigma')]\in A_{r-n}(X_{\Sigma'})$: For this we denote by $e_i^{*} \in M := \Hom(N,\Z)$ and $e'^{*}_i \in M' := \Hom(N',\Z)$ the dual base vectors of $e_i$ and $e'_i$ and the dual map of $\mathbb{L}_{F}$ with
\begin{eqnarray}\label{eq:dual}
\varphi \colon M' &\to &M \\
m'& \mapsto &m' \circ \mathbb{L}_{F}. \notag
\end{eqnarray}
By \cite[Lem. 12.5.2]{cox} and using that $\Sigma'$ is smooth we can write $[V(\sigma')]$ as the intersection of Chow classes of hyperplanes
\begin{eqnarray}\label{eq:prod}
[V(\sigma')] =[ V(\rho'_{1})] \cdots [V(\rho'_{n})],
\end{eqnarray}
where $\rho'_i := \cone(e'_i)$ are pairwise distinct and $\sigma' = \sum_{i= 1}^{n} \rho'_{i}$. From the description of torus invariant divisors as in  \cite[Sec. 1]{fultonsturm} we get
\begin{eqnarray}\label{eq:div}
V(\rho'_{i}) = \di\Big(\chi^{{e'_{i}}^{*}}\Big),
\end{eqnarray}
where $\chi^{{e'_{i}}^{*}}\colon \T_{N'} \to \C^{\times}$ denotes the character to $ {e'_{i}}^{*} \in M'$. Since the pull back of a principal divisor is given by pulling back the defining equation (cf. \cite[Sec. 2.2]{fulton}), we conclude that the pull back
 of $[V(\rho'_{i})]$ is given by
\begin{eqnarray*}
f^{*}\big([V(\rho'_{i}) ]\big) \overset{\eqref{eq:prod}}{=}  f^{*}\Big(\Big[ \di\Big(\chi^{{e'_{i}}^{*}}\Big) \Big]\Big) = \Big[ \di\Big(\chi^{\varphi( {e'_{i}}^{*})}\Big) \Big].
\end{eqnarray*}
Since $\varphi( {e'_{i}}^{*})$ is in $M$, this gives a character $\chi^{{e'_{i}}^{*}}$ on $\T_{N}$ and we conclude
\begin{eqnarray}\label{gl:produkt}
 \Big[ \di\Big(\chi^{\varphi( {e'_{i}}^{*})}\Big) \Big] &=& \sum_{i =1, \ldots,r} \langle \varphi ({e'_{i}}^{*}), e_{i}\rangle \cdot [V (\rho_{i})]\notag\\
&\overset{\eqref{eq:dual}}{=} & \sum_{i =1, \ldots,r } \langle {e_{i}'}^{*} \circ \mathbb{L}_{F}, e_{i}\rangle \cdot[V (\rho_{i})] \notag\\
&= & b_{i} \cdot [V(\rho_{i})],
\end{eqnarray}
where $\rho_{i} := \cone(e_{i}) \in \Sigma$. Summing up we have 
$$ 
f^{*} \big([V(\rho'_{i}) ] \big) = b_{i} \cdot [V  (\rho_{i})]. 
$$
Finally we can compute $b_{\sigma}$ as:
\begin{eqnarray}
		f^{*}([V(\sigma')]) & \overset{\eqref{eq:prod}}{=}  &f^{*}\big([V(\rho'_{1})] \cdots [V(\rho'_{k})]\big) \label{eq:5}\\
& = &f^{*}\big([ V(\rho'_{1})] \big) \cdots f^{*}\big ([V(\rho'_{k})]\big) \label{eq:6}\\
&\overset{\eqref{gl:produkt}}{=} & b_{1}\cdot[ V(\rho_{1})] \cdot \ldots \cdot b_{d}\cdot [ V(\rho_{k})] \notag \\
& = & b_{1}\cdots b_{d}\cdot [V(\rho_{1})] \cdots [ V(\rho_{k})] \label{eq:7}\\
&= & b_{1}\cdots b_{d} \cdot[ V(\sigma)]\label{eq:8}\\
& \overset{\eqref{eq:index}}{=} & [N'_{\sigma'}:\mathbb{L}_{F}(N_{\sigma})] \cdot  [ V(\sigma)],\notag
\end{eqnarray}
where \eqref{eq:6} follows from \eqref{eq:5} with \cite[Prop. 2.3 (d)]{fulton} and \eqref{eq:8} from \eqref{eq:7} with \cite[Lem. 12.5.2]{cox}. 
And the pull back
 in \eqref{eq:pullback} given by
\begin{eqnarray*}
		f^{*}([V(\sigma')]) & = & \sum_{\substack{\sigma \in \Sigma_{n}:\\ F(\sigma) = \sigma'} } b_{\sigma}\cdot[V(\sigma)]\\
		 & = & \sum_{\substack{\sigma \in \Sigma_{n}:\\ F(\sigma) = \sigma'} } [N'_{\sigma'}: \mathbb{L}_{F}(N_{\sigma})] \cdot [V(\sigma)].
	\end{eqnarray*}
Plugging this in \eqref{eq:deg} we get
	\begin{eqnarray}
	 	\deg \big(c \smallfrown f^{*}([V(\sigma')])\big) &= & \deg \Big(c \smallfrown  \sum_{\substack{\sigma \in \Sigma_{n}:\\ F(\sigma) = \sigma'} }\big[N'_{\sigma'} : \mathbb{L}_{F}(N_{\sigma})\big] \cdot [V(\sigma)] \Big) \notag\\
		& =& \sum_{\substack{\sigma \in \Sigma_{n}: \notag\\ 
		F(\sigma) = \sigma'} } \deg \Big(c \smallfrown \big[N'_{\sigma'} : \mathbb{L}_{F}(N_{\sigma})\big] \cdot [V(\sigma)]\Big) \\
		& =& \sum_{\substack{\sigma \in \Sigma_{n}: \\ 
		F(\sigma) = \sigma'} }   \deg \big(c \smallfrown  [V(\sigma)]\big)  \cdot  \big[N'_{\sigma'} : \mathbb{L}_{F}(N_{\sigma})\big]  \label{prop:eq:1} \\
		& = & \sum_{\substack{\sigma \in \Sigma_{n}:\\ F(\sigma) = \sigma' } }m_{\sigma} \cdot\big[N'_{\sigma'} : \mathbb{L}_{F}(N_{\sigma})\big]  \label{prop:eq:2} \\
		& =: & m_{\sigma'} \notag,
\end{eqnarray}
where \eqref{prop:eq:2} follows from \eqref{prop:eq:1} with proposition \ref{prop:isomorphism}. 
\end{proof}

So in the case of tropical fans we deduce from \cite[Sec. 17.2]{fulton} that the formation of $F_{*}$ induces a functorial homomorphism of the tropical cycle groups. This gives another proof to the given one in \cite{gathman}.

\section{Pull back of tropical cycles}

\begin{definition}\label{def:pullback}
Let $F \colon N_{\R} \to N'_{\R}$ be an integral, $\R$-affine map. Then there exists a natural \emph{pull back
 morphism}
\begin{eqnarray*}
F^{*} \colon \TZ^k (N'_{\R}) & \to & \TZ^k (N_{\R})\\
C' & \mapsto & F^{*}C',
\end{eqnarray*}
which satisfies $|F^{*}C' | \subseteq F^{-1}(|C'|)$. It is given as follows: Let $C'$ in $\TZ^k (N'_{\R})$, we write $C' := (\mathscr{C'}, m')$ for a polyhedral complex $\mathscr{C'}$ of definition 
such that $|\mathscr{C'}| = N'_{\R}$. We also choose an integral, $\R$-affine polyhedral complex $\mathscr{C}$ with $|\mathscr{C}| = |N_{\R}|$. After a suitable refinement we may assume that we have $F(\sigma) \in \mathscr{C'}$ for all $\sigma \in \mathscr{C}$. We choose a generic vector $v'\in N'_{\R}$ and a sufficiently small $\varepsilon > 0$ (cf. \cite[Sec. 4]{fultonsturm}) and equip $\mathscr{C}$ with the weight function
\begin{eqnarray*}
m_{\tau} := \sum_{\substack{\sigma \in \mathscr{C}^{0}  : \\ \tau \prec \sigma }} \, \sum_{\substack{\tau' \in \mathscr{C'}^{k}  : \\ F(\tau) \prec \tau'}} m^{\tau}_{\sigma, \tau'} \cdot m'_{\tau'}
\end{eqnarray*} for every $\tau $ in $\mathscr{C}^k$. The coefficient $m^{\tau}_{\sigma, \tau'}$ is given by
	$$
	m^{\tau}_{\sigma, \tau'}  := \begin{cases}   [N' : \mathbb{L}_{F}(N_{\sigma})+ N'_{\tau'}] , & \text{ if } F(\sigma) \cap (\tau' + \varepsilon \cdot v') \not = \emptyset  \text{ and } \\
0, & \text{ otherwise.}\end{cases}
	$$
This turns $(\mathscr{C}^{\geq k},m)$ into a tropical cycle of codimension $k$. Now one defines $F^{*}C := (\mathscr{C}^{\geq k},m)$. The formation of $F^{*}$ is a functorial homomorphism of the tropical cycle groups.
\end{definition}

To make sure that this definition is indeed well-defined we reduce it to the case of tropical fans, where we can derive the statement from the the pull back
 of Minkowski weights.\\
Let $\tau \in \mathscr{C}^{k}$ and after a  choice of a generic vector (cf. definition \ref{def:intersectionproduct}) and a small $\varepsilon >0 $ we can assume that there is a $\tau' \in \mathscr{C'}^{k} $ such that $F(\tau) = \tau'$. The linear part of $F$ induces an integral linear map of the quotient vector spaces
$$
N_{\R}/ \mathbb{L}_{\tau}\to N'_{\R}/\mathbb{L}_{\tau'}.
$$
Locally we may assume (with the standard procedure) that $C$ and $C'$ are Minkowski weights in the complete fan $\Sigma$ in $N_{\R}/ \mathbb{L}_{\tau}$ and $\Sigma'$ in $N'_{\R}/\mathbb{L}_{\tau'}$. Futhermore we can assume that this map sends polyhedra to polyhedra which yields a toric morphism $f \colon X_{\Sigma} \to X_{\Sigma'} $ between the complete toric varieties $X_{\Sigma}$ and $X_{\Sigma'}$, and so induces a homomorphism of Chow cohomology groups 
$$ f^{*} \colon A^{k}(X_{\Sigma'}) \to A^{k}(X_{\Sigma}).$$
Since cocycles in the Chow cohomology groups correspond to Minkowski weights we see (since the lattice index does not change by passing to the quotient vector spaces) that the pull back
 of a Minkowski weight (cf. \cite[Cor. 3.7]{fultonsturm}) agrees with the pull back
 of tropical cycles given in definition \ref{def:pullback}.
Now the remaining properties follow from the properties of the Chow cohomology. \par

In \cite{fran} the author introduces a pull back
 using the intersection product for tropical cycle as in \cite{allermann} and shows that its definition coincide with the given one. We finish this section with an example which illustrate the definition.

\begin{example} 
If we assume in the situation of definition \ref{def:pullback} that the map $F \colon N_\R \to N'_\R $ is surjective then the computation of the pull back weight function simplifies to 
$$
	m_{\tau}  = \begin{cases}  m_{\tau'} \cdot [N' : \mathbb{L}_{F}(N)+ N'_{\tau'}] , & \text{ if codim}(\tau') = k   \text{ and } \\
0, & \text{ if codim}(\tau') < k .\end{cases}
	$$
Now for the projection map to the first coordinate $ \Z	^{2} \to \Z$ let
\begin{eqnarray*}
F \colon \R^{2} & \to & \R \\
(x,y) & \mapsto & x,
\end{eqnarray*}
 the induced map of vector spaces like in example \ref{ex:projection}. We will calculate the pull back $F^{*}C'$ of the tropical cycle $C'$ with polyhedral complex of definition $\mathscr{C}'$ given in figure \ref{fig:pullback} and with weight on all facets of $\mathscr{C}'$ equal to $1$. For this we choose a complete fan $\mathscr{C}$ in $N_\R$ pictured in figure \ref{fig:pullback}. With this choice the given polyhedral complexes of definition $\mathscr{C}$ and $\mathscr{C}'$ are compatible with respect to $F$. 
 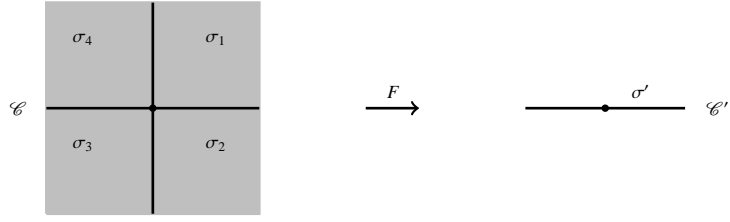
\begin{figure}[h]
  \centering

    \begin{tikzpicture}

     \begin{scope}[color = black ,text width=2em,scale=0.7,font={\scriptsize},line width=1pt]

   \filldraw[lightgray] (-2,-2) -- (2,-2) -- (2,2) -- (-2,2) -- (-2,-2);

    \draw[]  (0,0) -- (2,0);
    \draw[]  (0,0) -- (0,2);
    \draw[]  (0,0) -- (-2,0); 
    \draw[]  (0,0) -- (0,-2);

    \draw[]  (7,0) -- (10,0); 
    
    \fill (0,0) circle (0.07);

    \fill (8.5,0) circle (0.07);
    
        \draw[->]  (4,0) -- (5,0); 
    \node[above] at (4.9,0) {$F$};

         \node[above] at (1.5,1) {$\sigma_{1}$};
         \node[above] at (1.5,-1) {$\sigma_{2}$};
         \node[above] at (-1,-1) {$\sigma_{3}$};
         \node[above] at (-1,1) {$\sigma_{4}$};
         
     \node[left] at (-1.5,0) {$\mathscr{C}$};
    \node[above] at (9.5,0) {$\sigma'$};
    \node[right] at (10.2,0) {$\mathscr{C'}$};

    \end{scope}
    
   \end{tikzpicture} 
  \caption{Pull back of a tropical cycle.}
  \label{fig:pullback}
\end{figure}
Use the simplified formula to compute the weights on every facet we easily see that the weight for all facet is $1$. So $F^{*}C' = (\mathscr{C}^{\geq 0}, 1)$.

\end{example}

\section{Properties of tropical cycles}

In the last section we show that the intersection product commutes with the pull back and the projections formula holds.
\begin{proposition}\label{prop:pullbackschnitt}
Let $F \colon N_{\R} \to N'_{\R}$ be an integral, $\R$-affine map. For tropical cycles $C'_{1}$ and $C'_{2}$ in $N'_{\R}$ we have
$$
F^{*}(C'_{1}.C'_{2})= F^{*}(C'_{1}).F^{*}(C'_{2}).
$$

\end{proposition}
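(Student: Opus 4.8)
The plan is to follow the same reduction scheme used throughout these notes. Write $C'_{1},C'_{2}$ for the given tropical cycles of codimensions $k_{1}$ and $k_{2}$. Since a tropical cycle is completely determined by its weight function, it suffices to show that the weight functions of $F^{*}(C'_{1}.C'_{2})$ and of $F^{*}(C'_{1}).F^{*}(C'_{2})$ agree on every maximal polyhedron of a common polyhedral complex of definition. Both sides are tropical cycles of codimension $k_{1}+k_{2}$ in $N_{\R}$, so I first choose compatible polyhedral complexes of definition $\mathscr{C}$ in $N_{\R}$ and $\mathscr{C}'$ in $N'_{\R}$ such that $F(\sigma)\in\mathscr{C}'$ for all $\sigma\in\mathscr{C}$ and such that, after a common refinement, all three constructions can be carried out on them.

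Next, because the stable intersection product and the pull back are both local constructions, I would fix a polyhedron $\tau\in\mathscr{C}^{k_{1}+k_{2}}$ and pass to the star. Writing $\tau' := F(\tau)$, the linear part of $F$ descends to an integral linear map $\bar{F}\colon N_{\R}/\mathbb{L}_{\tau}\to N'_{\R}/\mathbb{L}_{\tau'}$ of the quotient spaces, and the star construction replaces $C'_{1},C'_{2}$ by tropical fans $\st_{C'_{i}}(\tau')$ in $N'_{\R}/\mathbb{L}_{\tau'}$. Applying the standard procedure (Remark \ref{re:verfeinerung}) I may assume these are Minkowski weights in a complete fan $\Sigma'$, hence correspond by Proposition \ref{prop:isomorphism} to Chow cohomology classes $c'_{1}\in A^{k_{1}}(X_{\Sigma'})$ and $c'_{2}\in A^{k_{2}}(X_{\Sigma'})$; the map $\bar{F}$ yields a toric morphism $f\colon X_{\Sigma}\to X_{\Sigma'}$ inducing $f^{*}\colon A^{*}(X_{\Sigma'})\to A^{*}(X_{\Sigma})$.

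Under this dictionary the stable intersection product on the target corresponds to the cup product: by the local fan displacement rule (Definition \ref{def:intersectionproduct}) together with the fan displacement rule in Chow cohomology (Proposition \ref{prop:fandisplacementrule}), $\st_{C'_{1}.C'_{2}}(\tau')$ corresponds to $c'_{1}\smallsmile c'_{2}$. Likewise, as established in the discussion following Definition \ref{def:pullback}, the pull back $F^{*}$ corresponds at the level of stars to the Chow cohomology pull back $f^{*}$, since the lattice indices in the local displacement rule for $F^{*}$ are precisely those computing $f^{*}$ of a Minkowski weight (cf. \cite[Cor. 3.7]{fultonsturm}). Thus the left-hand side $F^{*}(C'_{1}.C'_{2})$ localizes to $f^{*}(c'_{1}\smallsmile c'_{2})$, while the right-hand side $F^{*}(C'_{1}).F^{*}(C'_{2})$ localizes to $f^{*}(c'_{1})\smallsmile f^{*}(c'_{2})$. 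The desired identity then reduces to the fact that $f^{*}$ is a homomorphism of graded rings, i.e. $f^{*}(c'_{1}\smallsmile c'_{2})=f^{*}(c'_{1})\smallsmile f^{*}(c'_{2})$, which is a standard property of pull back in Chow cohomology \cite[Ch. 17]{fulton}. As this holds for every $\tau$, the two weight functions agree and the proposition follows.

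The main obstacle I anticipate is not the ring-homomorphism property itself, which is standard, but the bookkeeping required to make the localization genuinely compatible with all three operations simultaneously. I must verify that taking the star at $\tau$ commutes with both $F^{*}$ and the intersection product, that $\mathscr{C}$, $\mathscr{C}'$ and their refinements can be chosen so that $F(\tau)=\tau'$ holds with $\tau'$ again of codimension $k_{1}+k_{2}$, and that the generic-vector data (the vector $v'$ and the small $\varepsilon$) used in the two pull backs and in the intersection product can be selected consistently, so that every displacement computation takes place in one complete fan $\Sigma'$ and its refinement $\Sigma$. As in the proof of well-definedness of the stable intersection product, the injectivity and fan-independence of $f^{*}$ guarantee that none of these choices affects the outcome.
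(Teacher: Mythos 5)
Your proposal is correct and follows essentially the same route as the paper's own proof: localize at a facet $\tau$ via the star construction, use the standard procedure to identify the resulting tropical fans with Minkowski weights and hence with Chow cohomology classes, and reduce the identity to the fact that $f^{*}$ is a ring homomorphism with respect to the cup product. Your additional remarks on the compatibility of the generic-vector choices and the fan-independence of $f^{*}$ make explicit bookkeeping that the paper leaves implicit, but the argument is the same.
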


\begin{proof}
Let $k_i$ be the codimension of $C'_{i}$ and $\mathscr{C}_{i}$ a polyhedral complex of definition. After a suitable refinement we may assume that $\mathscr{C}'_{i}$ are subcomplexes in a complete polyhedral complex $\Sigma'$ and $\Sigma$ is a complete polyhedral complex in $N_{\R}$ such that  $F(\sigma) \in \Sigma'$ for all $\sigma \in \Sigma$.
Now we can assume with the definition of the stable intersection product and the pullback that the polyhedral complexes of definition for $F^{*}(C'_{1}.C'_{2})$ and $F^{*}(C'_{1}).F^{*}(C'_{2})$ agree. In the following we will denote it by $\mathscr{D}$. So it remains to compare the weights of every facet in $\mathscr{D}$.
We can do this locally. Let $\tau$ be a facet in $\mathscr{D}$. After choosing a generic vector we can assume, that $F(\tau) = \tau'$ has the same dimension in $\Sigma'$. Now the linear part of $F$ induces an integral linear map of the quotient vector spaces
$$
N_{\R}/ \mathbb{L}_{\tau}\to N'_{\R}/\mathbb{L}_{\tau'}.
$$
Now we can locally assume that all tropical cycles in question are tropical fans. Now the claim follows, with the standard procedure by identifying tropical fans with cocycles in the Chow cohomology groups, form the corresponding property of the Chow cohomology.
\end{proof}

\begin{proposition}\label{prop:projektionsformel}
Let $F \colon N_{\R} \to N'_{\R}$ be an integral, $\R$-affine map. For tropical cycles $C$ on $N_{\R}$ and $C'$ in $N'_{\R}$ we have
$$
F_{*}(F^{*}C'. C)= C'.F_{*}(C).
$$
\end{proposition}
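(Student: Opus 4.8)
The plan is to follow the reduction strategy used throughout these notes: localize, pass to stars in order to reduce to tropical fans and an induced integral linear map, translate everything through the standard procedure into Chow cohomology of toric varieties, and there invoke the classical projection formula for bivariant classes. First I would fix compatible data. Both $F_{*}(F^{*}C'.C)$ and $C'.F_{*}(C)$ are tropical cycles of the same codimension in $N'_{\R}$, so it suffices to produce a common polyhedral complex of definition and to compare the weight on each maximal face. After a suitable refinement I would assume, exactly as in the proofs of Proposition \ref{prop:katz} and Proposition \ref{prop:pullbackschnitt}, that $\mathscr{C}'$ is a complete complex of definition for $C'$ in $N'_{\R}$, that $\mathscr{C}$ is a complete complex of definition for $C$ in $N_{\R}$, and that $F(\sigma)$ is a polyhedron of $\mathscr{C}'$ for every $\sigma$ of $\mathscr{C}$. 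The weight of each side on a maximal face $\tau'$ is determined locally: dividing out $\mathbb{L}_{\tau'}$ and using the star construction at $\tau'$ (and at the faces $\tau$ in $N_{\R}$ over $\tau'$) replaces $C$, $C'$, $F_{*}C$ and $F^{*}C'$ by tropical fans and $F$ by the induced integral linear map $\bar{F}\colon N_{\R}/\mathbb{L}_{\tau}\to N'_{\R}/\mathbb{L}_{\tau'}$. At this point I may assume that all cycles in question are tropical fans.

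Next I would translate into Chow cohomology. By the standard procedure together with the construction in Proposition \ref{prop:katz} I obtain complete fans $\Sigma$ in $N_{\R}/\mathbb{L}_{\tau}$ and $\Sigma'$ in $N'_{\R}/\mathbb{L}_{\tau'}$ with a projective toric morphism $f\colon X_{\Sigma}\to X_{\Sigma'}$ between smooth projective toric varieties, and cocycles $c\in A^{*}(X_{\Sigma})$ and $c'\in A^{*}(X_{\Sigma'})$ corresponding to $C$ and $C'$. Proposition \ref{prop:katz} identifies the tropical push forward $F_{*}$ with the Chow-cohomological $f_{*}$, the discussion following Definition \ref{def:pullback} identifies $F^{*}$ with $f^{*}$, and the stable intersection product corresponds to the cup product $\smallsmile$ as in the well-definedness proof of Definition \ref{def:intersectionproduct}. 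Under these identifications $F_{*}(F^{*}C'.C)$ becomes $f_{*}(f^{*}c'\smallsmile c)$ and $C'.F_{*}(C)$ becomes $c'\smallsmile f_{*}c$. The asserted equality is then precisely the projection formula for bivariant classes $f_{*}(f^{*}c'\smallsmile c)=c'\smallsmile f_{*}c$, which holds by \cite[Ch. 17]{fulton}; translating the resulting identity of Minkowski weights back through the standard procedure gives the equality of the two tropical cycles.

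The hard part will be the local reduction for the push forward rather than the Chow-cohomology input, which is formal once the dictionary is in place. Unlike the pull back and the intersection product, whose local models at $\tau'$ only involve the single quotient $N'_{\R}/\mathbb{L}_{\tau'}$, the push forward mixes the contributions of all faces $\tau$ in $N_{\R}$ lying over $\tau'$ with $F|_{\tau}$ injective, each weighted by the lattice index $[N'_{\tau'}:\mathbb{L}_{F}(N_{\tau})]$ from Remark \ref{re:push}. I would therefore have to verify that taking stars is compatible with $F_{*}$, so that the weight of $F_{*}(F^{*}C'.C)$ at $\tau'$ is genuinely computed by $f_{*}(f^{*}c'\smallsmile c)$ on $X_{\Sigma}$, and that the lattice indices match the pull-back coefficients of $f$, carried out exactly as in equations \eqref{eq:index}--\eqref{prop:eq:2} of Proposition \ref{prop:katz}. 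Once this compatibility and the matching of indices are checked, the projection formula in Chow cohomology closes the argument.
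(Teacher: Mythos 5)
Your proposal is correct and takes essentially the same route as the paper: localize at a facet $\tau'$ via the star construction, reduce to tropical fans, transport everything through the standard procedure and Proposition \ref{prop:katz} into Chow cohomology, and invoke the projection formula of \cite[Ch. 17]{fulton}. The compatibility of stars with $F_{*}$ that you correctly single out as the delicate point (the sum over faces $\tau$ above $\tau'$ weighted by the lattice indices $[N'_{\tau'}:\mathbb{L}_{F}(N_{\tau})]$) is precisely the step the paper handles by citing the local push-forward formula \cite[Lem. 1.3.7]{rau} in its star computation.
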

\begin{proof}
Let $l$ and $(r'-k)$ be the dimension of $C$ and $C'$. At first we assume that all occurring polyheral complexes of definition for the tropical cycles are polyhedral subcomplexes in a complete and regular polyhedral complex $\Sigma$ in $N_{\R}$ and $\Sigma'$ in $N'_{\R}$. That means that $\Sigma_{\le l}$ and as well $\Sigma_{r-k}$ are a polyhedral complexes of definition for $C$ and $F^{*}C$. And that $\Sigma'_{\le r'-k}$ and $\Sigma'_{\le l}$ are polyhedral complexes of definition for $C'$ and $F_{*}C$. Furthermore we can assume that  $\tau \in F^{*} (\Sigma'_{\le r-k }). \Sigma_{\le l}$ and $\tau' \in \Sigma'_{\le r'-k}. F_{*} (\Sigma_{\le l})$ are facets with $F(\tau) = \tau'$ and 
$$
F^{\tau} \colon N_{\R}/ \mathbb{L}_{\tau}\to N'_{\R}/\mathbb{L}_{\tau'}
$$ is the integral linear map of the quotient vector spaces.
Locally in $\tau$ we get the tropical fans with rational fan of definition given by $\st_{\Sigma}(\tau)$ in $N_{\R}/ \mathbb{L}_{\tau}$ and $\st_{\Sigma}(\tau)$ in $N_{\R}/ \mathbb{L}_{\tau}$. We proof the claim while we are using that the projection formula already holds for tropical fans. This follows after identifying tropical fans with cocycles and using proposition \ref{prop:katz} from the properties of the Chow cohomology. Now the claim follows from the following computation:
\begin{eqnarray} 
\st_{\Sigma'_{\leq r'-k}. \Sigma'_{\leq l} }(\tau') &= & \st_{\Sigma'_{\leq r'-k}}(\tau') . \st_{\Sigma'_{\leq l} } (\tau') \label{eq:3}\\
& =  & \st_{\Sigma'_{\leq r'-k}}(\tau') . \bigg ( \sum_{\substack{\tau \in \Sigma_{l-k}: \\ F(\tau)= \tau'}} \big[N'_{\tau'}: \mathbb{L}_{\tau} (N_{\tau})\big] \, F^{\tau}_{*}\big(\st_{\Sigma_{\leq l}}(\tau)\big)  \bigg) \label{eq:4}\\
&  \overset{ }{=} & \sum_{\substack{\tau \in \Sigma_{l-k}: \\ F(\tau)= \tau'}} \big[N'_{\tau'}: \mathbb{L}_{\tau} (N_{\tau})\big] \,
  \bigg (\st_{\Sigma'_{\leq n'-k}}(\tau') . F^{\tau}_{*}\big(\st_{\Sigma_{\leq l}}(\tau)\big) \bigg ) \notag \\ 
& \overset{\text{}}{=}   & \sum_{\substack{\tau \in \Sigma_{l-k}: \\ F(\tau)= \tau'}}  \big[N'_{\tau'}:\mathbb{L}_{\tau}  (N_{\tau})\big] \,
\bigg (F^{\tau}_{*} \Big( (F^{\tau})^{*} \big(\st_{\Sigma'_{\leq r'-k }}(\tau') \big) . \st_{\Sigma_{\leq l}}(\tau)  \Big )\bigg )\notag\\
&  \overset{\text{}}{=}  & \sum_{\substack{\tau \in \Sigma_{l-k}: \\ F(\tau)= \tau'}} \big[N'_{\tau'}: \mathbb{L}_{\tau} (N_{\tau})\big]  \,
  \bigg (F^{\tau}_{*} \Big( \st_{F^{*} \big(\Sigma'_{\leq r'-k }\big)}(\tau)  . \st_{\Sigma_{\leq l}}(\tau)  \Big )\bigg )\notag\\
& = & \sum_{\substack{\tau \in \Sigma_{l-k}: \\ F(\tau)= \tau'}} \big[N'_{\tau'}: \mathbb{L}_{\tau}  (N_{\tau})\big] \,
 \Big (F^{\tau}_{*} \big( \st_{\Sigma_{\leq r-k }}(\tau) . \st_{\Sigma_{\leq l}}(\tau) \big)   \Big )\notag\\
& \overset{\text{}}{=} & \sum_{\substack{\tau \in \Sigma_{l-k}: \\ F(\tau)= \tau'}} \big[N'_{\tau'}: \mathbb{L}_{\tau}  (N_{\tau})\big] \, \Big (F^{\tau}_{*} \big( \st_{\Sigma_{\leq r-k } . \Sigma_{\leq l}}(\tau) \big) \Big )\notag\\
& \overset{\text{}}{=}  & \st_{F_{*} \big(\Sigma_{\leq r-k }. \Sigma_{\leq l}\big) }(\tau'),
\end{eqnarray}
where \eqref{eq:4} follows from \eqref{eq:3} with the local formula for the push forward given in \cite[Lem. 1.3.7]{rau}.
\end{proof}
Since the intersection product of \cite{allermann} coincides with our intersection product the statements of proposition \ref{prop:pullbackschnitt} und \ref{prop:projektionsformel} are also proved in \cite{fran}.

\newpage
\addcontentsline{toc}{chapter}{Literaturverzeichnis}
\printbibliography

Simon Flossmann, Fakultät für Mathematik, Universität Regensburg, Universitätsstraße 31, 93053 Regensburg, Germany, simon.flossmann@mathematik.uni-regensburg.de

\end{document}